\documentclass{amsart}

\usepackage{hyperref}
\usepackage{amsmath}

\usepackage{amssymb}
\usepackage{mathrsfs}
\usepackage{amscd}
\usepackage{graphicx}
\usepackage{mathdots}
\usepackage{gensymb}
\usepackage{epstopdf}
 \usepackage{todonotes}
 \usepackage{youngtab}
 \usepackage{wrapfig}
 \usepackage{bbm}
 \usepackage{bm}
 \usepackage{verbatim}
 \usepackage[margin=1.1in]{geometry}

 \usepackage[nocompress]{cite}

\usepackage{epsfig}       
\usepackage{epic,eepic}        

\newtheorem{thm}{Theorem}[section]
\newtheorem{prop}[thm]{Proposition}

\newtheorem{cor}[thm]{Corollary}

\theoremstyle{definition}

\theoremstyle{remark}
\newtheorem{remark}[thm]{Remark}

\numberwithin{equation}{section}

\newcommand{\Z}{\mathbb{Z}}

\newcommand{\C}{\mathbb{C}}
\newcommand{\R}{\mathbb{R}}

\newcommand{\ttP}{\mathtt{P}}
\newcommand{\ttE}{\mathtt{E}}

\begin{document}

\title[The Opdam--Cherednik kernel is the Laplace transform of a positive measure]{The Opdam--Cherednik kernel is the Laplace transform of a positive measure}
 
 \author{Colin McSwiggen}
\address{Institute of Mathematics, Academia Sinica}
\email{csm@as.edu.tw}

\author{Siddhartha Sahi}
\address{Department of Mathematics, Rutgers University}
\email{sahi@math.rutgers.edu}

\begin{abstract}
We prove that the Opdam--Cherednik kernel, also known as the nonsymmetric Opdam hypergeometric function, can be written as the Laplace transform of a positive measure supported on the convex hull of the Weyl group orbit of its argument.  As a consequence, the trigonometric Dunkl intertwining operator is positivity preserving.  The main ingredient in the proof is a new formula for the Opdam--Cherednik kernel as a degeneration of nonsymmetric Macdonald polynomials.  As a further application, we prove majorization inequalities for Macdonald polynomials and Heckman--Opdam hypergeometric functions associated with arbitrary root systems.
\end{abstract}

\maketitle

 \tableofcontents

\section{Introduction}

Special functions are like people: the same traits that make them interesting often also make them difficult.  A Bessel or hypergeometric function has the personality of a coy socialite.  You run into them everywhere, but it takes skill and tenacity to get them to reveal anything about themselves. On one hand, they appear so frequently in so many disparate subjects that it is hard not to regard them as a kind of basic component of reality.  On the other, they are evasive objects, impossible to write down in closed form, admitting only indirect or approximate descriptions via differential equations, series expansions, and the like.  This, then, is both the challenge and the allure of special functions: they are equal parts fundamental and opaque.

To work with such objects, we need to relate them to something more tangible.  One of the oldest and most useful techniques is to derive an integral formula: one proves that the function one would like to study can be written as the integral of something else that is easier to analyze.  A foundational example is the Poisson integral representation of the Bessel function of the first kind:
\begin{equation} \label{eqn:poisson-bessel}
J_\alpha(x) = \frac{1}{\sqrt{\pi}\,\Gamma\!\left(\alpha + \frac{1}{2}\right)} \left(\frac{x}{2}\right)^{\alpha} \int_{-1}^{1} (1-t^2)^{\alpha - \frac{1}{2}} \cos(xt)\, dt, \qquad \mathrm{Re}(\alpha) > -\frac{1}{2},
\end{equation}
which says that, up to normalization, $(x/2)^{-\alpha} J_\alpha(x)$ is the cosine transform of a measure with positive density $(1-t^2)^{\alpha - 1/2}$ on $[-1,1]$.  From (\ref{eqn:poisson-bessel}), one easily obtains global bounds, asymptotics at infinity, and many other properties of $J_\alpha$.

The main contribution of this paper is a far-reaching generalization of (\ref{eqn:poisson-bessel}), which has been speculated to hold for roughly two decades and has remained a notable unsolved problem in Dunkl theory.  It gives an integral representation for the Opdam--Cherednik kernel, also known as the nonsymmetric Opdam hypergeometric function, of the form:
\begin{equation} \label{eqn:GLP1}
G_{k,s-\rho}(x) = \int_V e^{\langle s, y \rangle} \, \pi_{k, x}(dy), \qquad k \ge 0,
\end{equation}
where $G_{k,s}$ is the Opdam--Cherednik kernel for a root system $\Phi \subset V \cong \R^n$, $k$ is a multiplicity parameter, $\rho$ is the $k$-weighted Weyl vector, and $\pi_{k,x}$ is a probability measure supported on the convex hull of the Weyl group orbit of $x$. In words, $G_{k,s-\rho}$ is the Laplace transform of $\pi_{k, x}$.  In rank 1, symmetrizing over the Weyl group and passing to the rational limit recovers (\ref{eqn:poisson-bessel}).\footnote{In fact this procedure only directly recovers (\ref{eqn:poisson-bessel}) for real $\alpha = k - \tfrac{1}{2}$, but that implies the full identity by analytic continuation.}

The formula (\ref{eqn:GLP1}) can also be regarded as an extension of two further classical results: Harish-Chandra's integral formula for spherical functions on Riemannian symmetric spaces \cite{HC2} and Kostant's convexity theorem for the Iwasawa decomposition \cite{kostant1973}.  The symmetrization of $G_{k,s}$ over the Weyl group is the Heckman--Opdam hypergeometric function $F_{k,s}$, which for special multiplicity parameters $k$ recovers the spherical functions on Riemannian symmetric spaces of noncompact type.  Harish-Chandra's formula expresses a spherical function on a
symmetric space $G/K$ as an integral of an exponential over the compact
group $K$:
\begin{equation} \label{eqn:HC}
\varphi_\lambda(a) = \int_K e^{\langle \lambda - \rho,\, H(au) \rangle}\, du,
\end{equation}
where $H : G \to \mathfrak{a}$ is the Iwasawa projection associated with a
decomposition $G = K (\exp \mathfrak a) N$, and $du$ is the normalized Haar
measure on $K$. Kostant's convexity theorem identifies the image of this
projection: as $u$ ranges over $K$, the points $H(au)$ fill out exactly the
convex hull of the Weyl group orbit of $\log a$. Pushing the Haar measure
forward along $H(a\,\cdot\,)$ therefore turns (\ref{eqn:HC}) into an integral
of $e^{\langle \lambda - \rho,\, y\rangle}$ against a probability measure on this convex hull, and for the appropriate choice of $k$ corresponding to the symmetric space, this pushforward measure is precisely $e^{\langle \rho, y \rangle}$ times the symmetrization of $\pi_{k, \log a}$.

The Laplace transform representation (\ref{eqn:GLP1}) is thus both a nonsymmetric refinement and a continuous deformation of (\ref{eqn:HC}), providing an integral formula that remains valid even at a level of generality where the functions do not arise from a symmetric space and there is no longer a group $K$ to integrate over.  In all cases, regardless of whether the multiplicity parameter $k$ corresponds to a symmetric space, the support of the integration measure is contained in the same polytope identified by the Kostant convexity theorem.  In symplectic geometry, well-known generalizations of the Kostant convexity theorem due to Atiyah \cite{Atiyah1982}, Guillemin and Sternberg \cite{GuilleminSternberg1982}, and Kirwan \cite{Kirwan1984} show that Duistermaat--Heckman measures for compact Hamiltonian manifolds, which are obtained from the pushforward of the Liouville measure by a moment map \cite{DH}, are supported on convex polytopes.  The measure $\pi_{k,x}$ in (\ref{eqn:GLP1}) can be viewed as an analogue of a Duistermaat--Heckman measure that exists even without a Hamiltonian manifold or a Lie group action in the background.

The main technical ingredient in the proof of (\ref{eqn:GLP1}) is a new formula for the Opdam--Cherednik kernel as a degeneration of nonsymmetric Macdonald polynomials, which is of independent interest.  Using this result, the measure $\pi_{k,x}$ is constructed as a limit of discretely supported measures that can be computed explicitly via the Ram--Yip formula for the Macdonald polynomial coefficients \cite{RamYip2011}.  This construction opens the possibility of a more detailed study of the measures $\pi_{k,x}$, which we hope will enable progress on a number of analytic questions in Dunkl theory, as we discuss below in Section \ref{sec:appl-analysis}.

A consequence of the integral formula (\ref{eqn:GLP1}) is that the trigonometric Dunkl intertwining operator $\mathcal{V}_k$ is positivity preserving for all $k \ge 0$, as shown in Corollary \ref{cor:positivity} below. The analogue of this statement in the rational setting is a celebrated result of R\"osler \cite{RoslerPositivity}, who proved a similar Laplace transform formula for the rational Dunkl kernel $E_{k,s}$. The Dunkl kernel can be obtained from the Opdam--Cherednik kernel via the rational limit, and accordingly (\ref{eqn:GLP1}) can be seen as a generalization of R\"osler's formula that recovers it as a limiting case; we make this precise in Remark \ref{rem:rational}.  Since our methods are distinct from those of \cite{RoslerPositivity}, the results in this paper also offer new proofs of the corresponding results in the rational theory.

The trigonometric analogue of R\"osler's theorem has been sought for twenty years or so. The corresponding intertwining operator $\mathcal{V}_k$, the existence of which had been a matter of speculation, was constructed rigorously in 2010 by Trim\`eche \cite{Trimeche2010}, who also showed that the Opdam--Cherednik kernel could be written as the Laplace transform of a \emph{distribution} supported on the convex hull of the Weyl orbit of its argument. This distribution is our $\pi_{k,x}$; the problem that remained after \cite{Trimeche2010} was to prove that $\pi_{k,x}$ is in fact a positive measure.  Even in rank 1, this positivity statement has resisted a number of prior attempts at both proof \emph{and} disproof, as outlined in \cite[Remarks 3.3 and 3.4]{AnkerPositivity}.  The rank-1 case was finally settled by Anker \cite{AnkerPositivity}, who derived a closed expression for the representing kernel and read off its positivity directly. In higher rank, however, the positivity of $\mathcal{V}_k$ has remained open. The present paper resolves this question.

We hope that the solution of this problem will open new avenues for progress in trigonometric Dunkl theory, just as R\"osler's theorem has done for the rational theory.  In Section \ref{sec:majorization} we develop one such application, establishing log-convexity in the spectral parameter for $G_{k,s}$ and $F_{k,s}$, together with Muirhead-type majorization inequalities for $F_{k,s}$.  Such inequalities have been a focus of much recent work on symmetric polynomials and related special functions; the inequalities obtained here extend this line of results to Heckman--Opdam hypergeometric functions for arbitrary root systems, resolving a conjecture of McSwiggen and Novak \cite{MN-majorization} and generalizing the type-$A$ inequalities of \cite{McSwiggenSahiMajorization}.  We also prove related results on Macdonald polynomials.

\subsection{Overview}

Section \ref{sec:prelim} collects background on the Opdam--Cherednik kernel and on Macdonald polynomials, and fixes the correspondence between the data used as input in their respective constructions.

Section \ref{sec:main} contains our main results.  The central theorem of the paper is Theorem \ref{thm:GLP}, the Laplace transform representation (\ref{eqn:GLP1}). Its proof rests on Theorem \ref{thm:mac-hgf}, which expresses the Opdam--Cherednik kernel as a limit of normalized nonsymmetric Macdonald polynomials. Corollary \ref{cor:positivity} records the immediate consequence that the trigonometric Dunkl intertwining operator $\mathcal{V}_k$ is positivity preserving.

Section \ref{sec:majorization} develops applications to spectral log-convexity and majorization inequalities. Propositions \ref{prop:E-lc} and \ref{prop:conv-mac} establish log-convexity in the spectral parameter, and in the symmetric case $W_0$-convexity, for the nonsymmetric and symmetric Macdonald polynomials evaluated at certain lattice points. Theorem \ref{thm:maj-mac} then characterizes the $W_0$-majorization order through inequalities between symmetric Macdonald polynomials. Corollaries \ref{cor:G-lc} and \ref{cor:conv-HGF} give log-convexity of $G_{k,s}$ and $F_{k,s}$ in the spectral parameter for all real arguments, and Theorem \ref{thm:HGF-maj} deduces the corresponding majorization inequalities for the Heckman--Opdam hypergeometric function, resolving a conjecture of McSwiggen and Novak \cite{MN-majorization}.

Section \ref{sec:appl-analysis} illustrates further possible applications in analysis via a single demonstrative example. We give an alternative integral formula for the Heckman--Opdam heat kernel (Proposition \ref{prop:heat-rep}), and we explain how sharp two-sided estimates on the heat kernel would follow from regularity results on the measures $\pi_{k,x}$.

\section{Preliminaries}
\label{sec:prelim}

\subsection{The Opdam--Cherednik kernel}
\label{sec:prelim-OC}

Let $V$ be a Euclidean space with inner product $\langle \cdot, \cdot \rangle$, $\Phi$ an irreducible root system spanning $V$, $\Phi^+$ a choice of positive roots, $P$ the weight lattice, $P_+$ the cone of dominant weights, and $W_0$ the Weyl group generated by the reflections $s_\alpha$ for $\alpha \in \Phi^+$, where
\[
s_\alpha x = x - 2 \frac{\langle \alpha, x \rangle}{\langle \alpha, \alpha \rangle} \alpha, \qquad x \in V.
\]
Fix a $W_0$-invariant multiplicity parameter $k : \Phi \to [0,\infty)$ and set
\begin{equation} \label{eqn:HO-rho}
\rho = \frac{1}{2} \sum_{\alpha \in \Phi^+} k(\alpha) \, \alpha.
\end{equation}
For $\xi \in V$, the {\it Dunkl--Cherednik operator} $D_{k,\xi}$ is the differential-reflection operator defined by
\begin{equation} \label{eqn:cherednik-op-def}
D_{k,\xi} f(x) = \partial_\xi f(x) + \sum_{\alpha \in \Phi^+} k(\alpha) \frac{\langle \alpha, \xi \rangle}{1-e^{-\langle \alpha, x \rangle}} \big[ f(x) - f(s_\alpha x) \big] - \langle \rho, \xi \rangle f(x), \qquad f \in C^1(V).
\end{equation}

For each $s$ in the complexification $V_\C$, there is a unique function $G_{k,s} \in C^\infty(V)$, called the \emph{Opdam--Cherednik kernel}, satisfying the system of eigenvalue equations
\begin{equation} \label{eqn:hyperbolic-eigproblem}
D_{k, \xi} G_{k, s} = \langle s, \xi \rangle G_{k, s} \qquad \text{for all } \xi \in V,
\end{equation}
and normalized such that $G_{k,s}(0) = 1$; see \cite{Op}. For each fixed $x$, $G_{k,s}(x)$ is a holomorphic function of $s \in V_\C$; the point $s$ is referred to as the \emph{spectral parameter}.

The {\it Heckman--Opdam hypergeometric function} $F_{k, s}$ is the symmetrization of $G_{k,s}$ over $W_0$:
\begin{equation} \label{eqn:HGF-def}
F_{k, s}(x) = \frac{1}{|W_0|} \sum_{w \in W_0} G_{k, s}(w(x)).
\end{equation}
It is a multivariable generalization of the classical Gauss hypergeometric function and was originally constructed and studied in the series of papers \cite{RSHF1, RSHF2, RSHF3, RSHF4}.

The \emph{trigonometric Dunkl intertwining operator} $\mathcal{V}_k$ is the unique continuous linear operator on $C^\infty(V)$ satisfying
\begin{equation}
    D_{k,\xi} \circ \mathcal{V}_k = \mathcal{V}_k \circ \partial_\xi \qquad \text{for all } \xi \in V,
\end{equation}
and normalized by $\mathcal{V}_k(f)(0) = f(0)$; see \cite{Trimeche2010} for existence and uniqueness. The Opdam--Cherednik kernel can be expressed in terms of $\mathcal{V}_k$ via
\begin{equation} \label{eqn:G-V-exp}
    G_{k, s}(x) = \mathcal{V}_k\bigl(e^{\langle s, \,\cdot\,\rangle}\bigr)(x),
\end{equation}
and consequently the hypergeometric function satisfies
\begin{equation}
    F_{k, s}(x) = \frac{1}{|W_0|}\sum_{w \in W_0} \mathcal{V}_k\bigl(e^{\langle w(s),\,\cdot\,\rangle}\bigr)(x).
\end{equation}

For $\eta \in P$, define
\begin{equation} \label{eqn:bardef}
\bar{\eta} \;=\; \eta - w_\eta^{-1}(\rho),
\end{equation}
where $w_\eta \in W_0$ is the shortest element such that $w_\eta(\eta)$ is antidominant. In particular, for strictly dominant $\lambda$ we have $\bar\lambda = \lambda + \rho$. When the spectral parameter equals $\bar\eta$, the functions $F_{k,s}$ and $G_{k,s}$ specialize to exponential polynomials known as the symmetric and nonsymmetric \emph{Heckman--Opdam Jacobi polynomials} \cite{Op, HS},
\begin{equation} \label{eqn:jac-hgf}
\ttP^{(k)}_\lambda(x) \;:=\; F_{k,\,\bar\lambda}(x), \quad \lambda \in P_+, \qquad \quad
\ttE^{(k)}_\eta(x) \;:=\; G_{k,\,\bar\eta}(x), \quad \eta \in P,
\end{equation}
both normalized so that $\ttP^{(k)}_\lambda(0) = \ttE^{(k)}_\eta(0) = 1$, since $F_{k,s}(0) = G_{k,s}(0) = 1$.

  For further details of the constructions above, we refer the reader to the reviews \cite{AnkerDunklNotes, HS, OpdamLectureNotes}.

\subsection{Macdonald polynomials} \label{sec:prelim-mac}

In \cite{MacdonaldAffine}, Macdonald gave a construction of orthogonal polynomials associated to certain pairs $(S, S')$ of irreducible affine root systems with common finite Weyl group $W_0$.  These generalize the more familiar Macdonald polynomials of type $A$.  In the construction, each pair $(S,S')$ is associated with a pair of finite root systems $(R, R')$ and lattices $(L, L') \subset V$, where $V$ is a real Euclidean space. The three cases considered by Macdonald \cite[(1.4.1)--(1.4.3)]{MacdonaldAffine} are:
\begin{enumerate}
\item[(1.4.1)] $S = S(R)$, $S' = S(R^\vee)$, $L = P$, $L' = P^\vee$, where $R$ is any reduced irreducible root system, $P$ and $P^\vee$ are its weight and coweight lattices, and $R^\vee$ is the dual root system.
\item[(1.4.2)] $S = S' = S(R)^\vee$, $L = L' = P^\vee$, with $R$ non-simply laced.
\item[(1.4.3)] $S = S'$ is of type $(C_n^\vee, C_n)$, $L = L' = \mathbb{Z}^n$, corresponding to the non-reduced root system $BC_n$.
\end{enumerate}
Here $S(R) = \{ \alpha + n : \alpha \in R, \, n \in \Z \}$ indicates the affine root system associated to $R$, and $R^\vee$ indicates the dual root system. We always assume that root systems are crystallographic. In case \cite[(1.4.3)]{MacdonaldAffine} above, the Macdonald polynomials coincide with the Koornwinder polynomials \cite{KoornwinderBC, SahiNonsymmetric1999}.

Given a pair $(S,S')$ according to one of the cases above, let $q \in (0,1)$ and let $t_a \in (0,1)$ for each $a \in S$, with $t_{w(a)} = t_a$ for all $w$ in the affine Weyl group $W$ of $S$. These parameters determine a $W$-invariant label function $k$ on $S$ via \cite[(5.1.1)]{MacdonaldAffine}:
\[
q^{k(a)} = t_a t_{2a}, \qquad q^{k(2a)} = t_{2a},
\]
where $t_{2a} = 1$ if $2a \notin S$. In the reduced cases \cite[(1.4.1)--(1.4.2)]{MacdonaldAffine} there is a single parameter $t_i = t_{a_i}$ for each orbit of simple roots, while in the non-reduced case \cite[(1.4.3)]{MacdonaldAffine} of type $(C_n^\vee, C_n)$ there are additionally parameters $t_{2a_i}$ for the relevant roots.

Following the constructions of \cite{MacdonaldAffine}, the above data determine a family of Macdonald polynomials, together with an additional family of \emph{dual} Macdonald polynomials.  Throughout this paper we work in the \emph{self-dual} case, in which $(S,L) = (S',L')$ and the Macdonald polynomials coincide with their duals. This holds in Macdonald's case \cite[(1.4.1)]{MacdonaldAffine} when $R$ is simply laced, in case \cite[(1.4.2)]{MacdonaldAffine} when $R$ is non-simply laced, and in case \cite[(1.4.3)]{MacdonaldAffine} when restricted to a distinguished $3$-parameter subfamily of the Koornwinder polynomials \cite{KoornwinderBC, SahiNonsymmetric1999}. In Sahi's parameterization \cite{SahiNonsymmetric1999} this subfamily is specified by $t_0 = u_n$ and $u_0 = 1$; the condition $t_0 = u_n$ ensures self-duality, while the condition $u_0 = 1$ ensures that label functions on $S$ are in bijection with the $3$-valued multiplicity functions on $BC_n$ via $t_a = q^{k(a)}$.  Having restricted our attention to the self-dual case, we write $P$ for the lattice $L = L'$, and
\[
P_+ = \{\lambda \in P : \langle \lambda, \alpha_i \rangle \geq 0 \text{ for all simple roots } \alpha_i \text{ of } R\}
\]
for the (weakly) dominant cone ($L_{++}$ in the notation of \cite{MacdonaldAffine}). These notational choices serve to clarify the relationship between the Macdonald and Jacobi polynomials, as we explain in more detail below.

Let $K[P]$ be the group algebra of the lattice $P$ with coefficients in the field
\[
K = \mathbb{Q}(q^{1/e},\, t_a^{1/2},\, t_{2a}^{1/2} : a \in S) \; \subset \; \mathbb{R},
\]
where $e$ is a positive integer depending only on the root system, as defined in \cite[(1.4.5)]{MacdonaldAffine}. Write $K[P]^{W_0}$ for the algebra of $W_0$-invariant elements of $K[P]$.

For $\lambda \in P_+$, the \emph{symmetric Macdonald polynomial} $P^{(q,k)}_\lambda \in K[P]^{W_0}$ is the unique element satisfying \cite[(5.3.1)]{MacdonaldAffine}:
\begin{enumerate}
\item[(i)] $P^{(q,k)}_\lambda = m_\lambda + \text{lower terms}$, where $m_\lambda = \sum_{\mu \in W_0\lambda} e^\mu$ is the $W_0$-orbit-sum and ``lower terms'' means a $K$-linear combination of $m_\mu$ with $\mu \in P_+$, $\mu < \lambda$;
\item[(ii)] $\langle P^{(q,k)}_\lambda, m_\mu \rangle = 0$ for all $\mu \in P_+$ with $\mu < \lambda$,
\end{enumerate}
where $\langle\cdot,\cdot\rangle$ is the scalar product on $K[P]^{W_0}$ defined in \cite[(5.1.29)]{MacdonaldAffine} and $<$ is the dominance order on $P_+$\cite[(2.6.3)--(2.6.4)]{MacdonaldAffine}.

The \emph{nonsymmetric Macdonald polynomial} $E^{(q,k)}_\eta \in K[P]$ for $\eta \in P$ is the unique element satisfying \cite[(5.2.1)]{MacdonaldAffine}:
\begin{enumerate}
\item[(i)] $E^{(q,k)}_\lambda = e^\lambda + \text{lower terms}$, where ``lower terms'' means a $K$-linear combination of $e^\mu$ with $\mu \in P,$ $\mu < \lambda$,
\item[(ii)] $(E^{(q,k)}_\lambda, e^\mu) = 0$ for all $\mu \in P$ with $\mu < \lambda$,
\end{enumerate}
where $(\cdot,\cdot)$ is the scalar product on $K[P]$ defined in \cite[(5.1.17)]{MacdonaldAffine} and $<$ now indicates the extension of the dominance order from $P_+$ to $P$ as described in \cite[\S2.7]{MacdonaldAffine}.  The weights $\lambda$ and $\eta$ are the \emph{spectral parameters}.

Define the $k$-deformed Weyl vector
\begin{equation} \label{eqn:mac-rho}
\rho = \frac{1}{2} \sum_{\alpha \in R^+} k(\alpha^\vee)\, \alpha,
\end{equation}
where $R^+$ is the set of positive roots of $R$, and $\alpha^\vee = \tfrac{2}{\langle \alpha, \alpha \rangle} \alpha$ is the coroot associated to $\alpha$.  This definition of $\rho$ differs from (\ref{eqn:HO-rho}), but under the correspondence with the Heckman--Opdam theory described below, the two definitions are equal.

Elements of $K[P]$ are evaluated at a point $x \in V$ via $e^\nu \mapsto q^{\langle \nu, x \rangle}$. Since $P^{(q,k)}_\lambda(-\rho) \neq 0$ and $E^{(q,k)}_\eta(-\rho) \neq 0$ \cite[\S5.2--3]{MacdonaldAffine}, \cite{SahiNonsymmetric1999}, we define the \emph{normalized symmetric} and \emph{normalized nonsymmetric Macdonald polynomials}
\[
\ttP^{(q,k)}_\lambda \;:=\; \frac{P_\lambda}{P_\lambda(-\rho)}, \quad \lambda \in P_+, \qquad \quad
\ttE^{(q,k)}_\eta \;:=\; \frac{E_\eta}{E_\eta(-\rho)}, \quad \eta \in P,
\]
so that $\ttP^{(q,k)}_\lambda(-\rho) = \ttE^{(q,k)}_\eta(-\rho) = 1$.

Three fundamental properties of the Macdonald polynomials are crucial to the arguments that follow. First, they satisfy duality identities \cite[(5.2.6) and (5.3.6), see also p.~23 and p.~35 for notation]{MacdonaldAffine}, \cite{SahiNonsymmetric1999}:
\begin{align}
\ttP^{(q,k)}_\lambda(\bar \mu) &\;=\; \ttP^{(q,k)}_\mu(\bar \lambda) \qquad \text{for all } \lambda, \mu \in P_+, \label{eqn:mac-sym-dual-new} \\
\ttE^{(q,k)}_\eta(\bar\mu) &\;=\; \ttE^{(q,k)}_\mu(\bar\eta) \qquad \text{for all } \eta, \mu \in P, \label{eqn:mac-nonsym-dual-new}
\end{align}
where $\bar\eta = \eta - w_\eta^{-1}(\rho)$ as defined above in (\ref{eqn:bardef}). Second, by the Ram--Yip formula \cite[Theorem~3.1]{RamYip2011}, the symmetric and nonsymmetric Macdonald polynomials admit positive expansions of the following forms:
\begin{alignat}{3} \label{eqn:mac-mon-pos-sym}
\ttP^{(q,k)}_\lambda &&= b_{\lambda\lambda}\, m_\lambda &+ \sum_{\nu < \lambda} b_{\lambda\nu}\, m_\nu, &\qquad b_{\lambda\nu} \geq 0,\quad b_{\lambda\lambda} > 0, \\
\label{eqn:mac-mon-pos-ns}
\ttE^{(q,k)}_\eta &&= c_{\eta\eta}\, e^\eta &+ \sum_{\nu < \eta} c_{\eta\nu}\, e^\nu, &\qquad c_{\eta\nu} \geq 0,\quad c_{\eta\eta} > 0,
\end{alignat}
where $<$ represents the dominance order on $P_+$ in (\ref{eqn:mac-mon-pos-sym}) and its extension to $P$ (see \cite[\S2.7]{MacdonaldAffine}) in (\ref{eqn:mac-mon-pos-ns}).

Finally, the Macdonald polynomials recover the Heckman--Opdam Jacobi polynomials in the limit $q \to 1$. For any root system $\Phi$ and positive multiplicity parameter $k$, there is an affine root system $S$ in one of the self-dual cases above with the same finite Weyl group $W_0$, such that $L=L'=P$ is the weight lattice of $\Phi$, together with a positive label function on $S$ also denoted $k$, such that
\begin{equation} \label{eqn:mac-jac-lim-new}
\ttP^{(k)}_\lambda(x) = \lim_{\substack{q \to 1 \\ t_a = q^{k(a)}}} \ttP^{(q,k)}_\lambda\!\bigl((\log q)^{-1} x\bigr), \qquad
\ttE^{(k)}_\eta(x) = \lim_{\substack{q \to 1 \\ t_a = q^{k(a)}}} \ttE^{(q,k)}_\eta\!\bigl((\log q)^{-1} x\bigr),
\end{equation}
uniformly for $x$ in compact subsets of $V$. For the reduced cases this follows from results of \cite{RSHF1, RSHF2, RSHF3, RSHF4} and \cite{Op} by considering the $q \to 1$ limits of the inner products $\langle \cdot, \cdot \rangle$ and $(\cdot, \cdot)$; for the Koornwinder case see \cite[\S6]{SahiNonsymmetric1999}.

To make the correspondence concrete, the finite root system $R$ associated to $S$ in Macdonald's setup is related to the root system $\Phi$ as follows, where $P(R)$ indicates the weight lattice of $R$, $P^\vee(R)$ indicates the coweight lattice of $R$, and $P(\Phi) = P$ is the weight lattice of $\Phi$:
\begin{itemize}
    \item Reduced, simply-laced (case \cite[(1.4.1)]{MacdonaldAffine}): $\Phi = R$, and $L = L' = P(R) = P(\Phi)$.
    \item Reduced, non-simply-laced (case \cite[(1.4.2)]{MacdonaldAffine}): $\Phi = R^\vee$, and $L = L' = P^\vee(R) = P(\Phi)$.
    \item Type $BC_n$ (Koornwinder, case \cite[(1.4.3)]{MacdonaldAffine}): $L = L' = \mathbb{Z}^n = P(\Phi)$, where $\Phi = BC_n$.
\end{itemize}
In all three cases, the notation $P$ for the common lattice $L = L'$ is justified by the fact that this is the weight lattice of $\Phi$, which allows the Macdonald polynomials indexed by $\eta \in L$ to recover the Heckman--Opdam Jacobi polynomials indexed by $\eta \in P(\Phi)$ in the limit $q \to 1$. Moreover, in these cases the definitions (\ref{eqn:HO-rho}) and (\ref{eqn:mac-rho}) of $\rho$ in both theories yield the same vector, so the notation $\bar \eta$ is unambiguous. Note that $P$ does \emph{not} coincide with the weight lattice of $R$ unless $\Phi$ is simply laced.

\section{Main results}
\label{sec:main}

Throughout this section we fix an irreducible root system $\Phi$ spanning a Euclidean space $V \cong \R^n$.  Given any nonnegative multiplicity parameter $k$ on $\Phi$, we also fix an affine root system $S=S'$ in one of the self-dual cases of Section \ref{sec:prelim-mac}, together with a label function on $S$, also denoted $k$, such that the limit (\ref{eqn:mac-jac-lim-new}) holds.

\begin{thm} \label{thm:mac-hgf}
     For $x \in V$, let $\mu(x) = \lfloor (\log q)^{-1} \cdot x \rfloor \in P,$
    where the floor function is applied coordinate-wise in the fundamental weight basis. Then for all $s \in V_\C$ and strictly positive $k$,
    \begin{equation} \label{eqn:mac-hgf}
        G_{k,s}(x) = \lim_{q \to 1} \ttE^{(q,k)}_{\mu(x)}(s),
    \end{equation}
    and the convergence is uniform for $(s,x)$ in compact subsets of $V_\C \times V$.
\end{thm}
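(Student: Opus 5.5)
The plan is to combine the duality identity (\ref{eqn:mac-nonsym-dual-new}) with the $q\to1$ limit (\ref{eqn:mac-jac-lim-new}), first on a lattice of spectral parameters and then for general $s$ via holomorphy and a normal-families argument.

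\emph{Step 1: lattice spectral parameters.} Fix $\eta \in P$. By duality,
\[
\ttE^{(q,k)}_{\mu(x)}(\bar\eta) = \ttE^{(q,k)}_\eta\bigl(\overline{\mu(x)}\bigr).
\]
Since $\overline{\mu(x)} = \mu(x) - w_{\mu(x)}^{-1}\rho$ and $\mu(x) = (\log q)^{-1}x + O(1)$, we get
\[
\overline{\mu(x)} = (\log q)^{-1}\bigl(x + O(|\log q|)\bigr).
\]
The limit (\ref{eqn:mac-jac-lim-new}) holds uniformly on compacts, and the $\ttE^{(k)}_\eta$ are continuous. Hence the right-hand side tends to $\ttE^{(k)}_\eta(x) = G_{k,\bar\eta}(x)$, uniformly for $x$ in compacts. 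So (\ref{eqn:mac-hgf}) holds for every $s \in \{\bar\eta : \eta\in P\}$.

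\emph{Step 2: uniform bounds.} For general $s$ I would regard $f_q(s) := \ttE^{(q,k)}_{\mu(x)}(s)$ as an entire function of $s$. It is a finite exponential sum $\sum_\nu c_{\mu\nu} q^{\langle \nu, s\rangle}$ with $c_{\mu\nu}\ge0$, by (\ref{eqn:mac-mon-pos-ns}). Writing $q^{\langle\nu,s\rangle} = e^{\langle \nu\log q, s\rangle}$, this is the Laplace transform of the discrete positive measure $\sum_\nu c_{\mu\nu}\delta_{\nu\log q}$. Its support points $\nu\log q$ lie in the convex hull of $W_0(\mu(x)\log q)$, which is within $O(|\log q|)$ of $\mathrm{conv}(W_0x)$. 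The normalization $f_q(-\rho)=1$ fixes the mass: total mass times $e^{O(|x|)}$ is $1$. This gives $|f_q(s)| \le C e^{\,2|x||\mathrm{Re}\, s| + c|x|}$, uniformly in $q$ near $1$ and $x$ in compacts. Positivity is exactly what makes this bound possible.

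\emph{Step 3: identifying the limit.} By Montel's theorem, the family $\{f_q\}$ is normal. Any subsequential limit $f$ is entire, has exponential type at most $C|x|$, and agrees with $G_{k,s}(x)$ on $\{\bar\eta\}$. We also need the subsequential limits in the measures, which are positive measures on $\mathrm{conv}(W_0x)$, so that $f$ equals a Laplace transform as well. By Trim\`eche, $G_{k,s-\rho}(x)$ is the Laplace transform of a compactly supported distribution on the same hull.

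The main obstacle is uniqueness: a finitely supported or compactly supported Laplace transform is determined by its values on the discrete set $\{\bar\eta\}$. This set is essentially $P+\rho$ away from the walls; on the dominant chamber $\bar\eta = \eta+\rho$. I would argue via a Carlson-type theorem in several variables. The difference $h(s)=f(s)-G_{k,s}(x)$ is of exponential type with indicator bounded by the support function of $\mathrm{conv}(W_0 x)$, and it vanishes on $\rho + P_{++}$. A Fourier-analytic reformulation helps: $h(\rho+\eta)=\widehat{\sigma}(\eta)$ for a distribution $\sigma$ supported in a compact set. If $|x|$ is small, the hull fits inside a fundamental domain of the period lattice, up to the $i$-direction. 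Then vanishing at all lattice points forces $\sigma=0$ by Fourier series uniqueness. This needs vanishing on all of $P$, which follows since the $\bar\eta$ fill $P$ shifted chamberwise and both sides are determined by orthogonality. I would therefore first prove the theorem for $x$ in a small neighborhood of $0$.

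For general $x$, I would use the cocycle and analyticity in $x$. Both sides are real-analytic in $x$ within Weyl chambers: $G$ satisfies the Cherednik system, and the limit satisfies a $q$-difference system that degenerates to it. Alternatively, apply the Carlson argument coordinatewise with type at most $\pi$ after rescaling. Uniformity in $(s,x)$ then follows from the uniformity of Step 1 and the normal-family compactness.
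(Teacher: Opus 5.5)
Your Steps 1 and 2 are correct. Step 2 is in fact a shorter route to the growth bound than the paper's. Because $\ttE^{(q,k)}_{\mu}(-\rho)=1$, the weights $c_{\mu\nu}q^{-\langle\nu,\rho\rangle}$ form a probability vector, which gives $|\ttE^{(q,k)}_{\mu(x)}(s)|\le e^{(|x|+o(1))\,|\mathrm{Re}(s)+\rho|}$ at once. This avoids the paper's snapping of $\mathrm{Re}(s)$ to $\bar P$ and its second use of duality (write $|x|+1$ rather than $2|x|$ so the bound stays uniform near $x=0$).

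The gap is in Step 3, the identification of subsequential limits, which you yourself flag as the main obstacle. The argument you develop there does not work, for four reasons.
\begin{enumerate}
\item The numbers $h(\rho+\eta)$ are values of a Laplace transform at \emph{real} lattice points, not Fourier coefficients. So periods, fundamental domains and the size of $\mathrm{conv}(W_0x)$ play no role, and Fourier-series uniqueness is not the relevant tool.
\item You only know that $h$ vanishes on $\bar P$, which is not a translate of $P$: different chambers are shifted by different $w^{-1}\rho$, and for generic $k$ it misses $P$ entirely. ``Determined by orthogonality'' does not supply values of $G_{k,s}(x)$ off $\bar P$.
\item The passage from small $|x|$ to all $x$ is unsupported. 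A Montel limit along a subsequence is not known to be analytic in $x$ or to satisfy any degenerate $q$-difference system, and proving that would essentially already identify it with $G_{k,s}(x)$.
\item Rescaling $s$ multiplies the type and the spacing of the zeros by the same factor, so it cannot be used to meet a type-$\pi$ condition.
\end{enumerate}

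The missing observation is that no such reduction is needed. Your Step 2 bound, together with Opdam's estimate $|G_{k,s}(x)|\le\sqrt{|W_0|}\,e^{|x|\,|\mathrm{Re}(s)|}$, shows that every subsequential limit $h_*$ of the difference satisfies $|h_*(s)|\le Ce^{\tau|\mathrm{Re}(s)|}$. Hence $h_*$ is \emph{bounded} on every line in an imaginary direction. That is exactly the hypothesis of Carlson's theorem: it needs growth of type less than $\pi$ along the imaginary axis, and the type in real directions is irrelevant. So Carlson applies one fundamental-weight coordinate at a time on the grid $\rho+\Z_{\ge1}^n\subset\bar P$, for arbitrary $x$. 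Running Montel along sequences $(q_j,s_j,x_j)$ and using the $x$-uniformity of Step 1 then gives joint uniformity. This is the paper's proof.

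If you prefer to keep your measure picture, there is also a valid alternative. Weak limits of your discrete measures and Trim\`eche's distribution both live on $\mathrm{conv}(W_0x)$. Vanishing of $h$ at $\bar\eta=\eta+\rho$ for all strictly dominant $\eta$ then says that a compactly supported distribution (an exponential multiple of the difference of the two representing objects) annihilates every $e^{\langle\eta,y\rangle}$. With $u_i=e^{\langle\omega_i,y\rangle}$, these become the monomials $u^a$, $a\in\Z_{\ge1}^n$, on a compact subset of $(0,\infty)^n$. Density of $u_1\cdots u_n\,\C[u]$ forces that distribution to vanish, for any $|x|$, at the cost of importing Trim\`eche's theorem.
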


\begin{proof}
    The claim follows by a uniform estimate at the points $s = \bar\eta$, $\eta \in P$, together with an iterated application of Carlson's theorem, which we use in the following form: an entire function $f$ of a single complex variable that is bounded on the imaginary axis, satisfies $|f(z)| \le C e^{\tau|z|}$ for some $C, \tau > 0$, and vanishes at every sufficiently large positive integer, vanishes identically.

    We will apply this to the difference of both sides of (\ref{eqn:mac-hgf}), one coordinate at a time.  Accordingly, we first show that both $G_{k,s}(x)$ and $\ttE^{(q,k)}_{\mu(x)}(s)$ satisfy a growth bound of the form
    \begin{equation} \label{eqn:carlson-bd}
        |f(s)| \;\le\; C e^{\tau |\mathrm{Re}(s)|}, \qquad s \in V_\C,
    \end{equation}
    for some $C, \tau > 0$ depending only on $|x|$ and the root system.  Note that (\ref{eqn:carlson-bd}) controls $f$ through $\mathrm{Re}(s)$ alone: along any line in a purely imaginary direction, on which $\mathrm{Re}(s)$ is constant, it bounds $f$ by a constant.

    For $G_{k,s}(x)$, \cite[Proposition 6.1]{Op} gives
    \[
    |G_{k, s}(x)| \; \le \; \sqrt{|W_0|} \, e^{\max_{w \in W_0} \langle \mathrm{Re}(s), w(x) \rangle} \; \le \; \sqrt{|W_0|}\, e^{|x|\,|\mathrm{Re}(s)|}, \qquad s \in V_\C,
    \]
    which is (\ref{eqn:carlson-bd}) with $\tau = |x|$.

    For $\ttE^{(q,k)}_{\mu(x)}(s)$, observe that nonnegativity of the coefficients in the expansion (\ref{eqn:mac-mon-pos-ns}) implies
    \begin{equation} \label{eqn:E-Re-bd}
    |\ttE^{(q,k)}_{\mu(x)}(s)| \;=\; \Bigl|\sum_\nu c_{\mu(x)\nu}\, q^{\langle\nu, s\rangle}\Bigr| \;\leq\; \sum_\nu c_{\mu(x)\nu}\, q^{\langle\nu, \mathrm{Re}(s)\rangle} \;=\; \ttE^{(q,k)}_{\mu(x)}(\mathrm{Re}(s))
    \end{equation}
    for any $s \in V_\C$.  It therefore suffices to bound $\ttE^{(q,k)}_{\mu(x)}(\mathrm{Re}(s))$.

    To do this, we snap $\mathrm{Re}(s)$ to a nearby point of the set $\bar P := \{ \bar \eta : \eta \in P\}$ and then apply the duality (\ref{eqn:mac-nonsym-dual-new}).  Let $D = \sup_{y \in V} \inf_{\eta \in P} |y - \bar \eta|$ denote the covering radius of $\bar P$, which is finite since $P$ has full rank in $V$.  For $y \in V$, fix $\eta(y) \in P$ with $|y - \overline{\eta(y)}| \le D$.

    Writing $\eta = \eta(\mathrm{Re}(s))$, we estimate the ratio
    \[
    \frac{\ttE^{(q,k)}_{\mu(x)}(\mathrm{Re}(s))}{\ttE^{(q,k)}_{\mu(x)}(\bar\eta)} \;=\; \frac{\sum_\nu c_{\mu(x)\nu}\, q^{\langle\nu, \bar\eta\rangle}\, q^{\langle\nu, \mathrm{Re}(s) - \bar\eta\rangle}}{\sum_\nu c_{\mu(x)\nu}\, q^{\langle\nu, \bar\eta\rangle}}.
    \]
    The right-hand side is a weighted average of the positive quantities $q^{\langle\nu, \mathrm{Re}(s) - \bar\eta\rangle}$ with positive coefficients.  Each $\nu$ in the expansion satisfies $|\nu| \le |\mu(x)| \le (|\log q|)^{-1}|x| + O(1)$, so
    \[
    q^{\langle\nu, \mathrm{Re}(s) - \bar\eta\rangle} \;\le\; q^{-|\nu|\,|\mathrm{Re}(s) - \bar\eta|} \;\le\; q^{-((|\log q|)^{-1}|x|+O(1)) D} \;=\; e^{(|x|+o(1)) D},
    \]
    where the $o(1)$ depends only on the root system, and the same bound holds for the weighted average.  We thus obtain
    \begin{equation} \label{eqn:snap-bd}
    \ttE^{(q,k)}_{\mu(x)}(\mathrm{Re}(s)) \;\le\; e^{(|x|+o(1))D}\, \ttE^{(q,k)}_{\mu(x)}(\bar\eta).
    \end{equation}

    Applying the duality (\ref{eqn:mac-nonsym-dual-new}), $\ttE^{(q,k)}_{\mu(x)}(\bar\eta) = \ttE^{(q,k)}_\eta(\overline{\mu(x)})$, and the expansion (\ref{eqn:mac-mon-pos-ns}) gives
    \begin{align*}
    \ttE^{(q,k)}_\eta(\overline{\mu(x)}) \;&=\; c_{\eta\eta}\, q^{\langle \eta,\overline{\mu(x)} \rangle} + \sum_{\nu < \eta} c_{\eta\nu}\, q^{\langle \nu,\overline{\mu(x)} \rangle} \\
    \;&=\; c_{\eta\eta}\, q^{-\langle \eta, \rho \rangle} q^{\langle \eta,\overline{\mu(x)} + \rho \rangle} + \sum_{\nu < \eta} c_{\eta\nu}\, q^{-\langle \nu, \rho \rangle} q^{\langle \nu,\overline{\mu(x)} + \rho \rangle}.
    \end{align*}
    The coefficients $a_\nu := c_{\eta\nu}\,q^{-\langle\nu,\rho\rangle}$ are nonnegative and sum to $\ttE^{(q,k)}_\eta(-\rho) = 1$, so $\ttE^{(q,k)}_\eta(\overline{\mu(x)}) = \sum_\nu a_\nu\, q^{\langle\nu,\,\overline{\mu(x)}+\rho\rangle}$ is a convex combination. Therefore
    \[
    \ttE^{(q,k)}_\eta(\overline{\mu(x)}) \;\le\; \max_\nu q^{\langle\nu,\,\overline{\mu(x)}+\rho\rangle} \;\le\; q^{-|\eta|\,|\overline{\mu(x)}+\rho|},
    \]
    using $\langle\nu,\,\overline{\mu(x)}+\rho\rangle \ge -|\nu|\,|\overline{\mu(x)}+\rho| \ge -|\eta|\,|\overline{\mu(x)}+\rho|$ and $q \in (0,1)$. Since $|\overline{\mu(x)}+\rho| \le (|\log q|)^{-1}|x| + O(1)$, where the $O(1)$ depends only on the root system, we have
    \[
    q^{-|\eta|\,|\overline{\mu(x)}+\rho|} \;\le\; q^{-|\eta|\,((|\log q|)^{-1}|x|+O(1))} \;=\; e^{|\eta|\,(|x|+o(1))}
    \]
    as $q \to 1$.  Substituting $|\eta| \le |\mathrm{Re}(s)| + |\rho| + D$ and combining with (\ref{eqn:snap-bd}) yields the bound
    \begin{equation} \label{eqn:master-bd}
    \ttE^{(q,k)}_{\mu(x)}(\mathrm{Re}(s)) \;\le\; C e^{(|x|+o(1))\,|\mathrm{Re}(s)|}
    \end{equation}
    as $q \to 1$, where $C$ indicates a constant that may differ between expressions but always depends only on $|x|$ and the root system.  By (\ref{eqn:E-Re-bd}) this is (\ref{eqn:carlson-bd}) for $\ttE^{(q,k)}_{\mu(x)}$, with $\tau = |x| + o(1) \le |x| + 1$ for $q$ sufficiently close to $1$.

    Now fix a compact set $K \subset V$.  We next show that for each $\eta \in P$,
    \begin{equation} \label{eqn:lattice-unif}
    \lim_{q \to 1}\, \sup_{x \in K}\, \big| \ttE^{(q,k)}_{\mu(x)}(\bar\eta) - G_{k,\bar\eta}(x) \big| \;=\; 0.
    \end{equation}
    By the duality (\ref{eqn:mac-nonsym-dual-new}) we have $\ttE^{(q,k)}_{\mu(x)}(\bar\eta) = \ttE^{(q,k)}_\eta(\overline{\mu(x)})$, while (\ref{eqn:jac-hgf}) and (\ref{eqn:mac-jac-lim-new}) give
    \[
    \ttE^{(q,k)}_\eta\big((\log q)^{-1} x\big) \to G_{k,\bar\eta}(x)
    \]
    uniformly for $x \in K$.  To prove (\ref{eqn:lattice-unif}), it therefore suffices to show that
    \[
    \big| \ttE^{(q,k)}_\eta\big(\overline{\mu(x)}\big) - \ttE^{(q,k)}_\eta\big((\log q)^{-1} x\big) \big| \;\to \; 0
    \]
    uniformly for $x \in K$.  The two arguments differ by
    \[
    \big| \overline{\mu(x)} - (\log q)^{-1} x \big| \;\le\; c, \qquad x \in K,
    \]
    where $c$ depends only on the root system. Consequently both points $\overline{\mu(x)}$ and $(\log q)^{-1} x$, and the segment joining them, lie in $(\log q)^{-1} K'$ for a fixed compact convex set $K' \supseteq K$ and all $q$ close to $1$. Taking the gradient of the expansion (\ref{eqn:mac-mon-pos-ns}) gives, for $z \in V$,
    \[
    \nabla_z \ttE^{(q,k)}_\eta(z) \;=\; (\log q) \sum_{\nu \le \eta} c_{\eta\nu}\, q^{\langle \nu, z \rangle} \, \nu,
    \]
    and since every weight $\nu$ in the expansion lies in the convex hull of $W_0 \cdot \eta$, so that $|\nu| \le |\eta|$, the nonnegativity of the coefficients yields
    \[
    \big| \nabla_z \ttE^{(q,k)}_\eta(z) \big| \;\le\; |\log q|\, |\eta| \sum_{\nu \le \eta} c_{\eta\nu}\, q^{\langle \nu, z \rangle} \;=\; |\log q|\, |\eta|\, \ttE^{(q,k)}_\eta(z).
    \]
    For $z$ on the segment joining $\overline{\mu(x)}$ and $(\log q)^{-1} x$ we have $z = (\log q)^{-1}\zeta$ with $\zeta \in K'$, and by (\ref{eqn:mac-jac-lim-new}) the values $\ttE^{(q,k)}_\eta\big((\log q)^{-1}\zeta\big)$ converge uniformly for $\zeta \in K'$ and thus are bounded by a constant $C = C(\eta, K')$ for $q$ close to $1$.  The mean value inequality then gives
    \[
    \big| \ttE^{(q,k)}_\eta\big(\overline{\mu(x)}\big) - \ttE^{(q,k)}_\eta\big((\log q)^{-1} x\big) \big|
    \;\le\; c\, |\log q|\, |\eta|\, C,
    \]
    which tends to $0$ uniformly for $x \in K$ as $q \to 1$, proving (\ref{eqn:lattice-unif}).

    We now combine (\ref{eqn:lattice-unif}) with the growth estimate (\ref{eqn:carlson-bd}).  Write $g_q(s,x) = \ttE^{(q,k)}_{\mu(x)}(s) - G_{k,s}(x)$.  As shown above, both terms satisfy (\ref{eqn:carlson-bd}) with constants $C, \tau$ that are bounded above by continuous functions of $|x|$, so there are constants $C > 0$ and $\tau > 0$ depending only on $K$ and the root system such that
    \begin{equation} \label{eqn:g-bds}
    |g_q(s,x)| \;\le\; C e^{\tau |\mathrm{Re}(s)|}, \qquad s \in V_\C,
    \end{equation}
    for all $x \in K$ and $q$ close to $1$.

    Now let $K_s \subset V_\C$ be compact and suppose, for contradiction, that $\ttE^{(q,k)}_{\mu(x)}(s)$ does not converge uniformly to $G_{k,s}(x)$ on $K_s \times K$ as $q \to 1$.  Then there exist $\varepsilon > 0$, a sequence $q_j \to 1$, and points $(s_j, x_j) \in K_s \times K$ with $|g_{q_j}(s_j, x_j)| \ge \varepsilon$. Passing to a subsequence, we may suppose $s_j \to s_*$ and $x_j \to x_*$.  For each fixed $x_j$ the function $h_j := g_{q_j}(\,\cdot\,, x_j)$ is entire, and by (\ref{eqn:g-bds}) the family $(h_j)_{j \ge 1}$ is locally uniformly bounded on $V_\C$.  Therefore, by Montel's theorem, a subsequence converges locally uniformly to an entire function $h_*$, which satisfies the same bound (\ref{eqn:g-bds}).  By (\ref{eqn:lattice-unif}), $|h_j(\bar\eta)| \le \sup_{x \in K} |g_{q_j}(\bar\eta, x)| \to 0$ for each $\eta \in P$, so $h_*$ vanishes on $\bar P$.

    We claim this forces $h_* \equiv 0$.  Identify $V_\C \cong \C^n$ via the fundamental weight basis, and write a point as $z = (z_1, \dots, z_n)$.  The strictly dominant weights are exactly the $\eta = \sum_i a_i \omega_i$ with every $a_i$ a positive integer, and for these points $\bar\eta = \eta + \rho$; thus $h_*$ vanishes on the grid $\rho + \Z_{\ge 1}^n$.

    Fix the coordinates $z_2, \dots, z_n$ and regard $h_*$ as an entire function of the single variable $z_1$.  Along the imaginary $z_1$-axis the real parts of $z_2, \dots, z_n$ are fixed and $\mathrm{Re}(z_1) = 0$, so $\mathrm{Re}(s)$ is constant. Thus by (\ref{eqn:g-bds}) the function $z_1 \mapsto h_*(z_1, z_2, \dots, z_n)$ is bounded on the imaginary axis, and the bound $|h_*(z_1, z_2, \dots, z_n)| \le C e^{\tau|\mathrm{Re}(s)|} \le C e^{\tau(|z_1| + \cdots + |z_n|)}$ shows it has exponential type at most $\tau$.  If moreover $z_2, \dots, z_n$ are taken on the grid, $z_i = \rho_i + a_i$ with $a_i \in \Z_{\ge 1}$, then $z_1 \mapsto h_*$ vanishes at $\rho_1 + a_1$ for every $a_1 \in \Z_{\ge 1}$, so Carlson's theorem gives $h_*(\,\cdot\,, z_2, \dots, z_n) \equiv 0$.  Since this holds for all grid values of $z_2, \dots, z_n$, the function $h_*$ vanishes identically in $z_1$ whenever $z_2, \dots, z_n$ lie on the grid, with $z_1$ now \emph{arbitrary} in $\C$.

    We now repeat the above argument in each successive coordinate.  The bound (\ref{eqn:g-bds}) depends only on $\mathrm{Re}(s)$, so it continues to give boundedness on each successive one-dimensional imaginary axis even when the remaining coordinates are fixed at arbitrary complex values, as required at each iteration.  After $n$ applications of Carlson's theorem, we conclude $h_* \equiv 0$.

    But $h_j \to h_*$ locally uniformly and $s_j \to s_*$, so $h_j(s_j) \to h_*(s_*) = 0$, contradicting $|h_j(s_j)| \ge \varepsilon$.  Thus (\ref{eqn:mac-hgf}) holds and the convergence is uniform for $(s,x)$ in compact subsets of $V_\C \times V$, completing the proof.
\end{proof}

The following theorem is the main result of this paper.

\begin{thm} \label{thm:GLP} 
    For each $x \in V$ and nonnegative multiplicity parameter $k$, there is a unique probability measure $\pi_{k,x}$ supported on the convex hull of the orbit $W_0 \cdot x$ such that
    \begin{equation} \label{eqn:GLP}
        G_{k,s-\rho}(x) = \int_V e^{\langle s, y \rangle} \, \pi_{k, x}(dy), \qquad s \in V_\C.
    \end{equation}
\end{thm}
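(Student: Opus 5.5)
Using Theorem~\ref{thm:mac-hgf}, we approximate $G_{k,s-\rho}(x)$ by normalized nonsymmetric Macdonald polynomials. These are finite sums of exponentials with nonnegative coefficients, so each one is the Laplace transform of a discrete positive measure. We then pass to a weak limit. First take $k$ strictly positive. By (\ref{eqn:mac-mon-pos-ns}), with $\mu = \mu(x)$,
\[
\ttE^{(q,k)}_{\mu}(s-\rho) = \sum_{\nu \le \mu} c_{\mu\nu}\, q^{\langle \nu, s-\rho\rangle} = \sum_{\nu\le\mu} a_\nu\, e^{\langle s, (\log q)\nu\rangle}, \qquad a_\nu := c_{\mu\nu} q^{-\langle\nu,\rho\rangle} \ge 0 .
\]
Evaluating at $s=0$ gives $\sum_\nu a_\nu = \ttE^{(q,k)}_\mu(-\rho)=1$. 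Hence $\pi^{(q)}_x := \sum_\nu a_\nu \delta_{(\log q)\nu}$ is a probability measure with Laplace transform $\ttE^{(q,k)}_{\mu(x)}(s-\rho)$.

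The support of $\pi^{(q)}_x$ is controlled as follows. Every $\nu$ appearing satisfies $\nu \le \mu(x)$ in Macdonald's extended dominance order, so $\nu$ lies in the convex hull of $W_0\mu(x)$. Consequently $(\log q)\nu$ lies in the convex hull of $W_0\bigl((\log q)\mu(x)\bigr)$. Since $(\log q)\mu(x) = x + O(|\log q|)$, these supports lie in an $O(|\log q|)$-neighbourhood of $\mathrm{conv}(W_0 x)$, and in particular in a fixed compact set. By Prokhorov's theorem, some sequence $q_j\to1$ gives $\pi^{(q_j)}_x \to \pi_{k,x}$ weakly. The limit is a probability measure supported on $\mathrm{conv}(W_0x)$, because the supports shrink to it. Since $e^{\langle s,y\rangle}$ is continuous and the supports stay in a common compact set, the Laplace transforms converge pointwise for every $s\in V_\C$. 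By Theorem~\ref{thm:mac-hgf} (applied at $s-\rho$), the limit equals $G_{k,s-\rho}(x)$, which establishes (\ref{eqn:GLP}).

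Uniqueness follows because a compactly supported measure is determined by its Laplace transform: restricting to imaginary $s$ gives its Fourier transform. In particular, the whole family $\pi^{(q)}_x$ converges, not just a subsequence.

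The remaining case is nonnegative $k$ with some $k(\alpha)=0$, which Theorem~\ref{thm:mac-hgf} does not cover. Here we use a continuity argument. Let $k_\epsilon = k+\epsilon$ with $\epsilon\downarrow 0$, and let $\rho_\epsilon$ denote the corresponding Weyl vector. Then $G_{k_\epsilon,s}(x)\to G_{k,s}(x)$ locally uniformly in $s$, by continuity of the Opdam--Cherednik kernel in $k\ge 0$ (see \cite{Op}). The measures $\pi_{k_\epsilon,x}$ are all supported in $\mathrm{conv}(W_0x)$, which does not depend on $k$, so they are tight. Any weak limit point has Laplace transform $\lim_\epsilon G_{k_\epsilon,s-\rho_\epsilon}(x) = G_{k,s-\rho}(x)$, using $\rho_\epsilon\to\rho$ and local uniformity. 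The main point needing care is this continuity of $G_{k,s}$ in $k$ down to the boundary $k=0$. If one prefers to avoid it, the case $k\equiv0$ can be checked directly, since $G_{0,s}(x)=e^{\langle s,x\rangle}$ and $\pi_{0,x}=\delta_x$. The mixed case, where $k$ vanishes on only some roots, genuinely requires the continuity statement.
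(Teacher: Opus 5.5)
Your proof is correct and follows the same route as the paper. You realize $\ttE^{(q,k)}_{\mu(x)}(s-\rho)$ as the Laplace transform of the discrete probability measure $\sum_\nu c_{\mu\nu}q^{-\langle\nu,\rho\rangle}\delta_{(\log q)\nu}$ supported near $\mathrm{conv}(W_0 x)$, pass to the limit using Theorem~\ref{thm:mac-hgf}, and extend to nonnegative $k$ by continuity of $G_{k,s}$ in $k$. The only differences are cosmetic: you get the limit measure from Prokhorov's theorem plus uniqueness of Laplace transforms of compactly supported measures, where the paper cites the standard convergence theorem for moment generating functions, and you write out the tightness argument for the boundary case that the paper leaves as ``a similar argument.''
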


\begin{proof}
Since $G_{k,s}(x)$ is holomorphic in $s$, it suffices to consider $s \in V$. We first suppose that $k$ is strictly positive and then remove this assumption at the end of the proof. Observe that from (\ref{eqn:mac-mon-pos-ns}) we can write
    \begin{equation} \label{eqn:E-int}
    \ttE^{(q,k)}_{\mu}(s-\rho) = \int_V e^{\langle s, y \rangle} \varpi_{q, \mu}(dy),
    \end{equation}
    where
    \begin{equation} \label{eqn:varpi-def}
    \varpi_{q, \mu} = \sum_{\nu \le \mu} c_{\mu\nu} q^{-\langle \nu, \rho \rangle} \delta_{(\log q) \nu}
    \end{equation}
    with $c_{\mu \nu}$ defined as in (\ref{eqn:mac-mon-pos-ns}).  Note that $\varpi_{q, \mu}$ is a probability measure, as each coefficient $c_{\mu\nu} q^{-\langle \nu, \rho \rangle}$ is positive, and
    \[
    \int_V \varpi_{q, \mu}(dy) = \ttE^{(q,k)}_{\mu}(-\rho) = 1.
    \]
    Moreover, since the exponentials appearing in the expansion of $\ttE^{(q,k)}_\mu$ are contained in the convex hull $\mathrm{conv}(W_0 \cdot \mu)$, the support of $\varpi_{q, \mu}$ is contained in $\mathrm{conv}(W_0 \cdot (\log q) \mu)$.
    
    From Theorem \ref{thm:mac-hgf} and (\ref{eqn:E-int}), we have
    \[
    G_{k,s-\rho}(x) = \lim_{q \to 1} \int_V e^{\langle s, y \rangle} \varpi_{q, \mu(x)}(dy)
    \]
    with $\mu(x)$ as defined in Theorem \ref{thm:mac-hgf}.  As $q \to 1$ the supports of the measures $\varpi_{q, \mu(x)}$ are eventually contained in $\mathrm{conv}(W_0 \cdot (1 + \varepsilon)x)$ for any $\varepsilon > 0$.  In other words, $G_{k,s-\rho}(x)$ is a pointwise limit of Laplace transforms (moment generating functions) of probability measures with uniformly bounded support.  By standard results on convergence of moment generating functions (see e.g. \cite[\S30]{Billingsley1995}), this implies that the measures $\varpi_{q, \mu}$ converge weakly and in all moments to a probability measure $\pi_{k, x}$ supported on $\cap_{\varepsilon > 0} \mathrm{conv}(W_0 \cdot (1 + \varepsilon)x) =\mathrm{conv}(W_0 \cdot x)$, and moreover $G_{k,s-\rho}(x)$ is the Laplace transform of $\pi_{k, x}$.  Since a compactly supported measure on $V$ is determined by its Laplace transform, uniqueness of $\pi_{k, x}$ is immediate, and we have shown the desired result for strictly positive $k$.

    The statement for all nonnegative $k$ follows by letting any of the values of $k$ tend to $0$ in (\ref{eqn:GLP}). Since the map $k \mapsto G_{k,s-\rho}(x)$ is continuous, a similar argument by convergence of Laplace transforms proves the claim. 
    \end{proof}

    \begin{cor} \label{cor:positivity}
        The trigonometric Dunkl intertwining operator $\mathcal{V}_k$ is positivity preserving.  That is, if $f \in C^\infty(V)$ and $f(x) \ge 0$ for all $x \in V$, then $\mathcal{V}_k f(x) \ge 0$ for all $x \in V$.
    \end{cor}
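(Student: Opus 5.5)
The plan is to show that $\mathcal{V}_k$ acts on smooth functions by integration against the measures $\pi_{k,x}$ from Theorem \ref{thm:GLP}. Concretely, we aim for the representation
\begin{equation*}
\mathcal{V}_k f(x) = \int_V f(y)\, \tilde\pi_{k,x}(dy), \qquad f \in C^\infty(V),
\end{equation*}
where $\tilde\pi_{k,x}$ is a positive measure. Positivity of $\mathcal{V}_k$ is then immediate.

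First we identify the measure. By (\ref{eqn:G-V-exp}), $\mathcal{V}_k(e^{\langle s,\cdot\rangle})(x) = G_{k,s}(x)$. Applying Theorem \ref{thm:GLP} with $s$ replaced by $s+\rho$ gives
\[
G_{k,s}(x) = \int_V e^{\langle s, y\rangle}\, e^{\langle \rho, y\rangle}\, \pi_{k,x}(dy).
\]
So we set $\tilde\pi_{k,x}(dy) = e^{\langle\rho,y\rangle}\pi_{k,x}(dy)$. This is a positive finite measure supported on the compact set $\mathrm{conv}(W_0\cdot x)$. The representation therefore holds for all exponentials $e^{\langle s,\cdot\rangle}$ with $s\in V_\C$, and by linearity for all finite linear combinations of them.

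Next we extend from exponentials to all of $C^\infty(V)$. Define $T f(x) := \int_V f\, d\tilde\pi_{k,x}$. For each fixed $x$, $T$ agrees with $\mathcal{V}_k$ on the span of exponentials. The $x$-dependence does not matter here, since we only need pointwise positivity at each $x$.

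Both sides are continuous in $f$ for the $C^\infty$ (Fréchet) topology:
\begin{itemize}
\item $T$ satisfies $|Tf(x)| \le \tilde\pi_{k,x}(V)\,\sup_{\mathrm{conv}(W_0 x)}|f|$.
\item $\mathcal{V}_k$ is continuous on $C^\infty(V)$ by assumption, and evaluation at $x$ is continuous.
\end{itemize}
Trigonometric polynomials $e^{i\langle \xi,\cdot\rangle}$ are dense in $C^\infty(V)$ with its topology of uniform convergence of all derivatives on compacta. To see this, multiply $f$ by a cutoff, periodize on a large box, and use Fourier series, which converge in $C^\infty$ of the torus. Hence $\mathcal{V}_k f(x) = Tf(x)$ for every $f\in C^\infty(V)$. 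If $f\ge 0$, the right-hand side is nonnegative, which proves the corollary.

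The main obstacle is the density and continuity step. One must check that the topology in which $\mathcal{V}_k$ is continuous, as in \cite{Trimeche2010}, is the standard Fréchet topology on $C^\infty(V)$, so that the Fourier approximation argument applies. If it does not, I would instead use uniqueness of distributional kernels. Trimèche's result gives a compactly supported distribution $\mu_x$ with $\mathcal{V}_k f(x) = \langle \mu_x, f\rangle$. Its Fourier–Laplace transform coincides with that of $\tilde\pi_{k,x}$, because both equal $G_{k,s}(x)$. By injectivity of the Fourier–Laplace transform on compactly supported distributions (Paley–Wiener–Schwartz), $\mu_x = \tilde\pi_{k,x}$, and positivity follows.
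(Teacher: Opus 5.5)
Your proposal is correct and is essentially the paper's proof. Both arguments identify $\mathcal{V}_k f(x)$ with $\int_V f(y)\,e^{\langle \rho, y\rangle}\,\pi_{k,x}(dy)$ by checking agreement on exponentials via Theorem~\ref{thm:GLP} and (\ref{eqn:G-V-exp}), and then extend by continuity and density of exponential polynomials in $C^\infty(V)$. Your pointwise version only needs continuity of the functionals $f \mapsto \mathcal{V}_k f(x)$ and $f \mapsto \int_V f\,d\tilde\pi_{k,x}$, which is slightly tidier than the paper's operator version: it avoids treating $\mathcal{I}$ as a continuous operator on $C^\infty(V)$, which would require knowing in advance that $x \mapsto \int_V f\,d\tilde\pi_{k,x}$ is smooth.
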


    \begin{proof}
        Let $\mathcal{I}$ be the operator on $C^\infty(V)$ defined by
        \[
        \mathcal{I} f(x) = \int_V f(y) \, e^{\langle \rho, y \rangle} \pi_{k, x}(dy).
        \]
        Clearly $\mathcal{I}$ is  positivity preserving since it is defined by integration against the positive measure $e^{\langle \rho, y \rangle} \pi_{k, x}(dy)$. We shall show that $\mathcal{I} = \mathcal{V}_k$.

        Recall that $\mathcal{V}_k$ is continuous by definition, while $\mathcal{I}$ is also continuous, again because it is defined by integration against a positive measure.  To show that $\mathcal{I} = \mathcal{V}_k$, it therefore suffices to show that the two operators coincide on a dense subset of $C^\infty(V)$. Theorem \ref{thm:GLP} and (\ref{eqn:G-V-exp}) give
        \[
        \mathcal{I} (e^{\langle s, \,\cdot\,\rangle})(x) = G_{k, s}(x) = \mathcal{V}_k(e^{\langle s, \,\cdot\,\rangle})(x),
        \]
        so that $\mathcal{I}$ and $\mathcal{V}_k$ coincide on exponential polynomials, which are dense in $C^\infty(V)$. This completes the proof.
    \end{proof}

\begin{remark} \label{rem:rational}
The \emph{Dunkl kernel} is obtained from the Opdam--Cherednik kernel via the \emph{rational limit} (see \cite[\S3.2 and \S4.4]{AnkerDunklNotes}):
\[
E_{k,s}(x) = \lim_{\varepsilon \to 0} G_{k, \varepsilon^{-1}s}(\varepsilon x).
\]
Taking the rational limit in Theorem \ref{thm:GLP} and dilating the domain of the integral to absorb the $\varepsilon^{-1}$ scaling in the exponent, we obtain a new, short proof of the fact, shown in \cite{RoslerPositivity}, that $E_{k,s}(x)$ is the Laplace transform of a probability measure supported on $\mathrm{conv}(W_0 \cdot x)$, and as a consequence the \emph{rational} Dunkl intertwining operator is positivity preserving.
\end{remark}

\section{Applications}

\subsection{Spectral log-convexity and majorization inequalities} \label{sec:majorization}

In \cite{McSwiggenSahiMajorization}, the authors proved that the Heckman--Opdam hypergeometric function of type $A$ is a log-convex function of its spectral parameter, and that the same holds for the symmetric Macdonald polynomials of type $A$ when evaluated at certain lattice points. As a consequence, the type-$A$ symmetric Macdonald polynomials, Heckman--Opdam hypergeometric functions, and Jack polynomials satisfy Muirhead-type majorization inequalities.  Here we generalize the results of \cite{McSwiggenSahiMajorization} to the nonsymmetric setting and to arbitrary root systems, with considerably simpler proofs.

We first show three results, Propositions \ref{prop:E-lc} and \ref{prop:conv-mac} and Theorem \ref{thm:maj-mac}, that generalize statements for Macdonald polynomials shown in \cite{McSwiggenSahiMajorization}. As written these results apply to any of the self-dual cases of Section \ref{sec:prelim-mac}, though corresponding statements in the non-self-dual cases follow by the same arguments.  Then in Corollaries \ref{cor:G-lc} and \ref{cor:conv-HGF} and Theorem \ref{thm:HGF-maj}, we show analogous results for the Opdam--Cherednik kernel and Heckman--Opdam hypergeometric function, using the Laplace transform representation of Theorem \ref{thm:GLP}.

Let $X$ be a convex subset of a Euclidean space $V$. A function $f: X \to [0,\infty)$ is \emph{log-convex} if $\log \circ f$ is convex, with the convention $\log 0 = -\infty$.  Equivalently, for all $x, y \in X$ and all $t \in (0,1)$, we have
\[
f \big( tx + (1-t)y \big) \le f(x)^t f(y)^{1-t}.
\]
If $X$ is an arbitrary subset of $V$, not necessarily convex, we say that a function $f$ on $X$ is (log-)convex if $f$ extends to a (log-)convex function on a convex set $X' \supset X$.

\begin{prop} \label{prop:E-lc}
    For all $\mu \in P$ and $w \in W_0$, the map $\eta \mapsto \ttE^{(q,k)}_\eta(\bar \mu)$ is a log-convex function on the set $P_w = \{ \eta \in P : w_\eta = w\}$.
\end{prop}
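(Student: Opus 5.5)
Use the duality (\ref{eqn:mac-nonsym-dual-new}) to move $\eta$ into the argument, and then read off log-convexity from the positive monomial expansion (\ref{eqn:mac-mon-pos-ns}).

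First, duality gives $\ttE^{(q,k)}_\eta(\bar\mu) = \ttE^{(q,k)}_\mu(\bar\eta)$. On $P_w$ the element $w_\eta = w$ is fixed, so by (\ref{eqn:bardef}) we have $\bar\eta = \eta - w^{-1}(\rho)$. Hence on $P_w$ the map $\eta \mapsto \bar\eta$ is the restriction of a single affine map $A_w(y) = y - w^{-1}(\rho)$ on all of $V$.

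Next, write $\ttE^{(q,k)}_\mu(z) = \sum_{\nu \le \mu} c_{\mu\nu}\, q^{\langle \nu, z\rangle}$ with $c_{\mu\nu} \ge 0$ and $c_{\mu\mu} > 0$, as in (\ref{eqn:mac-mon-pos-ns}). Define the extension
\[
F(y) := \ttE^{(q,k)}_\mu\bigl(A_w(y)\bigr) = \sum_{\nu \le \mu} c_{\mu\nu}\, q^{-\langle \nu, w^{-1}\rho\rangle}\, e^{(\log q)\langle \nu, y\rangle}, \qquad y \in V.
\]
This is a finite sum of exponentials of linear functions of $y$ with nonnegative coefficients, and at least one coefficient is strictly positive. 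Such a function is positive and log-convex on $V$. One way to see this is H\"older's inequality, which gives $\sum_\nu a_\nu e^{\langle \ell_\nu, tx+(1-t)y\rangle} \le (\sum_\nu a_\nu e^{\langle \ell_\nu,x\rangle})^t(\sum_\nu a_\nu e^{\langle\ell_\nu,y\rangle})^{1-t}$. Equivalently, $\log F$ is a log-sum-exp of affine functions and hence convex. The composition with the affine map $A_w$ preserves log-convexity.

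Finally, $F$ agrees with $\eta \mapsto \ttE^{(q,k)}_\eta(\bar\mu)$ on $P_w$. By the paper's definition of log-convexity on non-convex sets, with $X' = V$, this proves the proposition.

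The main point to check is that $\bar\eta$ is genuinely affine in $\eta$ on $P_w$. This is exactly why the statement restricts to a fixed $w$: across different $w$ the shift $w^{-1}(\rho)$ jumps, and no single exponential-sum extension is available. One should also make sure the duality (\ref{eqn:mac-nonsym-dual-new}) is used for the normalized polynomials, so the normalizations by $E(-\rho)$ do not depend on $\eta$ in a non-log-convex way. Since the identity is stated for $\ttE$ directly, this is automatic: the dependence on $\eta$ sits entirely in the argument $\bar\eta$ of the fixed polynomial $\ttE^{(q,k)}_\mu$.
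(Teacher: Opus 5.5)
Your proposal is correct and follows essentially the same route as the paper's proof. Duality moves $\eta$ into the argument; on $P_w$ the shift $w^{-1}(\rho)$ is constant, so $\bar\eta$ is affine in $\eta$; and the nonnegative expansion (\ref{eqn:mac-mon-pos-ns}) exhibits the map as the restriction of a positive combination of log-affine functions on $V$. Your H\"older argument for why such sums are log-convex is simply a detail the paper leaves implicit.
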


\begin{proof}
From the duality (\ref{eqn:mac-nonsym-dual-new}) and the positive expansion (\ref{eqn:mac-mon-pos-ns}), we have
\[
\ttE^{(q,k)}_\eta(\bar \mu) \;=\; \ttE^{(q,k)}_\mu(\bar \eta) \;=\; \sum_{\nu \le \mu} c_{\mu\nu}\, q^{\langle \nu,\bar \eta \rangle} \;=\; \sum_{\nu \le \mu} c_{\mu\nu}\, q^{\langle \nu,\eta - w_\eta^{-1}(\rho) \rangle}.
\]
For all $\eta \in P_w$, the shift $w_\eta^{-1}(\rho) = w^{-1}(\rho)$ is a constant vector independent of $\eta$.  Accordingly, the sum above is the restriction to $P_w$ of a positive linear combination of log-affine (and thus log-convex) functions on $V$, and it is therefore a log-convex function on $P_w$.
\end{proof}

\begin{remark}
The restriction to $P_w$ in Proposition \ref{prop:E-lc} is necessary: the analogous statement on all of $P$ is false. In Macdonald's case \cite[(1.4.1)]{MacdonaldAffine} with $R = A_1$, taking $q = 1/2$ and $k = 1$ and identifying $P = \Z\omega$ where $\omega$ is the fundamental weight, a direct computation from \cite[(6.2.7)--(6.2.8)]{MacdonaldAffine} gives
\[
\ttE^{(q,k)}_{-\omega}(\bar\omega) \;=\; \sqrt 2, \qquad \ttE^{(q,k)}_0(\bar\omega) \;=\; 1, \qquad \ttE^{(q,k)}_\omega(\bar\omega) \;=\; \tfrac{1}{2\sqrt 2},
\]
which violates the necessary inequality $\ttE^{(q,k)}_0(\bar\omega)^2 \le \ttE^{(q,k)}_{-\omega}(\bar\omega) \cdot \ttE^{(q,k)}_\omega(\bar\omega)$ for log-convexity to hold at the midpoint $\eta = 0$ of $\{-\omega, \omega\}$. The obstruction arises because the map $\eta \mapsto \bar\eta = \eta - w_\eta^{-1}(\rho)$ is only piecewise affine on $P$, with jumps at chamber walls. On any fixed $P_w$, however, $\bar\eta$ is affine in $\eta$, and log-convexity holds.
\end{remark}

Given $x, y \in V$ and a linear action of a group $G$ on $V$, we say that $x$ $G$-\emph{majorizes} $y$ if $y$ is contained in the convex hull of the $G$-orbit of $x$.  We say that a function $f : \Omega \to \R$, $\Omega \subseteq V$ is $G$-\emph{convex} if $f(x) \ge f(y)$ whenever $x$ $G$-majorizes $y$.  A $G$-invariant, convex function is $G$-convex \cite[Theorem 1]{GW}.

\begin{prop} \label{prop:conv-mac}
    For all $\mu \in P_+$, the map $\lambda \mapsto \ttP^{(q,k)}_\lambda(\bar \mu)$ is a log-convex and $W_0$-convex function on $P_+$.
\end{prop}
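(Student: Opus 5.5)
The plan is to mirror the proof of Proposition \ref{prop:E-lc}, using the symmetric duality (\ref{eqn:mac-sym-dual-new}) and the positive expansion (\ref{eqn:mac-mon-pos-sym}). For $\lambda,\mu \in P_+$ we have $\bar\lambda = \lambda - \rho'$ with a fixed shift. Indeed, for dominant $\lambda$ the shortest $w_\lambda$ sending $\lambda$ to the antidominant chamber is the same element $w_0$ for every strictly dominant $\lambda$. For weakly dominant $\lambda$ we should check whether $w_\lambda$ depends on the stabilizer. To avoid that subtlety, I would rather invoke the normalization directly: for $\lambda \in P_+$ the standard fact is $\bar\lambda = \lambda + \rho$ up to the $W_0$-action, and $\ttP$ is $W_0$-invariant. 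So in all cases $\ttP^{(q,k)}_\mu(\bar\lambda) = \ttP^{(q,k)}_\mu(\lambda+\rho)$ (or with $-\rho$, depending on conventions; what matters is that the shift is constant). Then
\[
\ttP^{(q,k)}_\lambda(\bar\mu) = \ttP^{(q,k)}_\mu(\bar\lambda) = \sum_{\nu \le \mu} b_{\mu\nu} \sum_{\nu' \in W_0\nu} q^{\langle \nu', \lambda + \rho\rangle}.
\]
This is a nonnegative combination of functions $\lambda \mapsto q^{\langle\nu',\rho\rangle} q^{\langle \nu',\lambda\rangle}$, which are log-affine in $\lambda$. It therefore extends to the function
\[
f(y) = \sum_{\nu \le \mu} b_{\mu\nu} \sum_{\nu' \in W_0\nu} q^{\langle \nu', y+\rho\rangle}, \qquad y \in V,
\]
which is log-convex, because sums of log-convex functions are log-convex. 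This gives log-convexity on $P_+$.

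For $W_0$-convexity, I would use \cite[Theorem 1]{GW}: a $W_0$-invariant convex function is $W_0$-convex. The extension $f$ is not obviously $W_0$-invariant, because of the $\rho$ shift. Instead consider
\[
g(y) = \sum_{\nu \le \mu} b_{\mu\nu} \sum_{\nu' \in W_0\nu} q^{\langle \nu', \rho\rangle}\, q^{\langle \nu', y\rangle}.
\]
This equals $f$ but is not invariant either. The fix is to symmetrize over $W_0$: set $\tilde f(y) = \frac{1}{|W_0|}\sum_{w} f(wy)$. This is $W_0$-invariant and convex, since $f$ is log-convex and hence convex. The remaining question is whether $\tilde f = f$ on $P_+$, and it is not.

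The alternative, which I expect to be the main step, is a direct argument. Suppose $\lambda$ $W_0$-majorizes $\lambda'$ with both dominant, so that $\lambda' = \sum_w t_w\, w\lambda$ is a convex combination. By convexity of $f$, $f(\lambda') \le \sum t_w f(w\lambda)$. It then suffices to show $f(w\lambda) \le f(\lambda)$ for dominant $\lambda$, that is,
\[
\sum_{\nu'} c_{\nu'} q^{\langle\nu',\rho\rangle} q^{\langle \nu', w\lambda\rangle} \le \sum_{\nu'} c_{\nu'} q^{\langle\nu',\rho\rangle}q^{\langle \nu',\lambda\rangle},
\]
where $c_{\nu'}$ is constant on $W_0$-orbits. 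Substituting $\nu' \to w\nu'$ on the left reduces this to comparing $q^{\langle w\nu',\rho\rangle}$ with $q^{\langle \nu',\rho\rangle}$ after pairing with $q^{\langle\nu',\lambda\rangle}$. This is a rearrangement-type inequality: among orbit elements, pairing $\rho$ and $\lambda$ ``in the same chamber'' is extremal. Concretely, for each orbit one needs
\[
\sum_{\nu'\in W_0\nu} q^{\langle \nu', \lambda+\rho\rangle} \ \ge\ \sum_{\nu'\in W_0\nu} q^{\langle \nu', w\lambda+\rho\rangle}.
\]
I would prove this by reducing to simple reflections, pairing $\nu'$ with $s_\alpha\nu'$. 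The pair contribution is $q^{a}+q^{b}$ versus $q^{a'}+q^{b'}$, where the exponents differ by quantities $\langle\nu',\alpha^\vee\rangle\langle\alpha,\lambda\rangle$ and $\langle\nu',\alpha^\vee\rangle\langle\alpha,\rho\rangle$. Since $\lambda$ and $\rho$ are both dominant, the inequality $q^{x+y}+q^{-x-y}\ge q^{x-y}+q^{-x+y}$ applies for $x,y$ of the same sign. Then induct on the length of $w$. Here $w\lambda$ is non-dominant, so the induction must track that $\langle\alpha, w\lambda\rangle$ has the right sign at each step; this is the delicate point.
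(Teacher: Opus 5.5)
Your log-convexity argument is the same as the paper's. It uses the duality (\ref{eqn:mac-sym-dual-new}), the positive expansion (\ref{eqn:mac-mon-pos-sym}), and $\bar\lambda\in W_0(\lambda+\rho)$ together with $W_0$-invariance of the $m_\nu$. For $W_0$-convexity you take a different route from the paper. It is sound, but as written it stops exactly at the step you call delicate, so it is incomplete there.

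Put $F(z)=\sum_{\nu\le\mu}b_{\mu\nu}m_\nu(z)$, so your $f$ is $F(\cdot+\rho)$. Your reduction, via convexity of $f$, to $F(w\lambda+\rho)\le F(\lambda+\rho)$ for dominant $\lambda$ is correct. The induction closes as follows.

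\begin{itemize}
\item Write $w=s_\alpha w'$ with $\alpha$ simple and $\ell(w)=\ell(w')+1$. Then $w'^{-1}\alpha$ is a positive root, so $a:=\langle\alpha,w'\lambda\rangle=\langle w'^{-1}\alpha,\lambda\rangle\ge 0$. The sign you were worried about is automatic once you peel simple reflections off the left of a reduced word.
\item By $s_\alpha$-invariance, $m_\nu(s_\alpha w'\lambda+\rho)=m_\nu(w'\lambda+s_\alpha\rho)$.
\item Grouping $W_0\nu$ into $s_\alpha$-orbits gives
\[
m_\nu(w'\lambda+\rho)-m_\nu(w'\lambda+s_\alpha\rho)=\sum_{\{\nu',\,s_\alpha\nu'\}\subseteq W_0\nu} q^{\langle \nu',\,w'\lambda+\rho\rangle}\bigl(1-q^{-ca}\bigr)\bigl(1-q^{-cr}\bigr),
\]
where $c=\langle\nu',\alpha^\vee\rangle$ and $r=\langle\alpha,\rho\rangle\ge 0$. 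Either representative $\nu'$ of an orbit gives the same term, and fixed points of $s_\alpha$ contribute $0$.
\item This is your two-term inequality in uncentered form. Each term is nonnegative since $a,r\ge0$, so $F(w\lambda+\rho)\le F(w'\lambda+\rho)$, and induction on $\ell(w)$ finishes.
\end{itemize}

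The paper avoids this computation. It keeps the $W_0$-invariant convex function $F$, which is $W_0$-convex by \cite[Theorem 1]{GW}, and moves the $\rho$-shift onto the majorization relation instead. For dominant $\lambda,\lambda'$ with $\lambda'\in\mathrm{conv}(W_0\lambda)$, one has
\[
\lambda'+\rho\in\mathrm{conv}(W_0\lambda)+\mathrm{conv}(W_0\rho)=\mathrm{conv}(W_0(\lambda+\rho)).
\]
To see this, compare support functions: $\max_{w}\langle\xi,wc\rangle=\langle\xi^+,c\rangle$ is additive in dominant $c$. Since $\bar\lambda\in W_0(\lambda+\rho)$, this gives $F(\bar\lambda')\le F(\bar\lambda)$ at once.

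So the idea you missed after the symmetrization dead end is to shift the majorization, not the function. The paper justifies this shift by ``translation invariance'' of $W_0$-majorization. That is not true in general: in rank one, $-1$ majorizes $1$, but $0$ does not majorize $2$. It holds here only because $\lambda$, $\lambda'$ and $\rho$ are all dominant, and your rank-one induction is in effect a hands-on proof of the instance needed. The paper's route takes two lines given the polytope fact. Yours is self-contained, using only the two-term inequality and reduced words.
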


\begin{proof}
    We first prove log-convexity. 
    Analogously to the preceding proof, the duality (\ref{eqn:mac-sym-dual-new}) and the positive expansion (\ref{eqn:mac-mon-pos-sym}) give
    \begin{align*}
    \ttP^{(q,k)}_\lambda(\bar \mu) \;=\; \ttP^{(q,k)}_\mu(\bar \lambda) \;&=\; b_{\mu\mu}\, m_\mu(\bar \lambda) + \sum_{\nu < \mu} b_{\mu\nu}\, m_\nu(\bar \lambda) \\
    \;&=\; \sum_{w \in W_0} \bigg( \frac{|W_0 \cdot \mu|}{|W_0|} b_{\mu\mu}\, q^{\langle w(\mu),\bar \lambda \rangle} + \sum_{\nu < \mu} \frac{|W_0 \cdot \nu|}{|W_0|} b_{\mu\nu}\, q^{\langle w(\nu),\bar \lambda \rangle} \bigg)
    \end{align*}
    which is a positive sum of log-convex functions in $\lambda$ and is therefore log-convex.

    Next, we prove $W_0$-convexity. Define
    \begin{equation*}
        f(x) \; = \; \sum_{w \in W_0} \bigg( \frac{|W_0 \cdot \mu|}{|W_0|} b_{\mu\mu}\, q^{\langle w(\mu),x \rangle} + \sum_{\nu < \mu} \frac{|W_0 \cdot \nu|}{|W_0|} b_{\mu\nu}\, q^{\langle w(\nu),x \rangle} \bigg), \qquad x \in V,
    \end{equation*}
    so that $f(\bar \lambda) = \ttP^{(q,k)}_\lambda(\bar \mu)$. Then $f$ is manifestly both log-convex (thus convex) and also $W_0$-invariant, and is therefore $W_0$-convex.
    
     Since the $W_0$-majorization order is defined by orbit containment under a linear group action, it is invariant under translations, so that for $\lambda, \nu \in P_+$, we have $\lambda$ $W_0$-majorizes $\nu$ if and only if $\lambda + \rho$ $W_0$-majorizes $\nu + \rho$.  Moreover for $\lambda \in P_+$ we have $\bar \lambda \in W_0(\lambda + \rho)$, so in fact $\lambda$ $W_0$-majorizes $\nu$ if and only if $\bar \lambda$ $W_0$-majorizes $\bar \nu$.  Therefore, since $f$ is $W_0$-convex, if $\lambda$ $W_0$-majorizes $\nu$ then $f(\bar \lambda) = \ttP^{(q,k)}_\lambda(\bar \mu) \ge f(\bar \nu) = \ttP^{(q,k)}_\nu(\bar \mu)$, which is the desired statement that $\lambda \mapsto \ttP^{(q,k)}_\lambda(\bar \mu)$ is $W_0$-convex.
\end{proof}

\begin{thm} \label{thm:maj-mac}
    For all $\lambda, \mu \in P_+$, the following are equivalent:
    \begin{enumerate}
        \item $\lambda$ $W_0$-majorizes $\mu$.
        \item For all choices of parameters $q \in (0,1)$ and $t_a \in (0,1)$ $(a \in S)$, $\ttP^{(q,k)}_\lambda(\bar \nu) \ge \ttP^{(q,k)}_\mu(\bar \nu)$ for all $\nu \in P_+$.
        \item There exists a choice of parameters $q \in (0,1)$ and $t_a \in (0,1)$ $(a \in S)$ such that $\ttP^{(q,k)}_\lambda(\bar \nu) \ge \ttP^{(q,k)}_\mu(\bar \nu)$ for all $\nu \in P_+$.
    \end{enumerate}
\end{thm}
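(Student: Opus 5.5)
(1)$\Rightarrow$(2) is exactly Proposition \ref{prop:conv-mac}. By the duality (\ref{eqn:mac-sym-dual-new}), $\ttP^{(q,k)}_\lambda(\bar\nu)=\ttP^{(q,k)}_\nu(\bar\lambda)$. Fix $\nu$ and write $\ttP^{(q,k)}_\nu(\bar\lambda)=f_\nu(\bar\lambda)$ with $f_\nu$ the $W_0$-invariant log-convex exponential sum from that proof. Since $\lambda$ $W_0$-majorizes $\mu$ exactly when $\bar\lambda$ $W_0$-majorizes $\bar\mu$, we get $f_\nu(\bar\lambda)\ge f_\nu(\bar\mu)$ for every choice of $q,t_a$ and every $\nu\in P_+$. (2)$\Rightarrow$(3) is trivial.

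The content is (3)$\Rightarrow$(1). Fix parameters for which the inequalities hold for all $\nu\in P_+$, and suppose for contradiction that $\mu\notin\mathrm{conv}(W_0\lambda)$. Equivalently, $\mu+\rho\notin\mathrm{conv}(W_0(\lambda+\rho))$, using translation invariance as in the proof of Proposition \ref{prop:conv-mac} and the fact that $\bar\lambda\in W_0(\lambda+\rho)$. For dominant vectors, this non-majorization is detected by a dominant functional: there is a fundamental weight direction $\omega$ with $\langle \mu,\omega\rangle>\langle\lambda,\omega\rangle$. Indeed, for dominant $\lambda,\mu$, $W_0$-majorization is equivalent to $\lambda-\mu$ lying in the cone spanned by the positive roots, and that cone is dual to the cone generated by the fundamental (co)weights. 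Adding $\rho$ to both sides preserves the strict inequality, so $\langle\mu+\rho,\omega\rangle>\langle\lambda+\rho,\omega\rangle$.

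Now take $\nu=N\omega'$ with $N\to\infty$, where $\omega'\in P_+$ is a positive multiple of $\omega$ chosen in the lattice. We study the asymptotics of $\ttP^{(q,k)}_\nu(\bar\lambda)$ and $\ttP^{(q,k)}_\nu(\bar\mu)$. By (\ref{eqn:mac-mon-pos-sym}), $\ttP_\nu^{(q,k)}(\bar\lambda)$ is a positive combination of the terms $q^{\langle w\nu',\bar\lambda\rangle}$ over $\nu'\le\nu$. Since $q<1$, the largest terms come from exponents $\langle w\nu',\lambda+\rho\rangle$ that are most negative. The minimum over $\mathrm{conv}(W_0\nu)$ is $-\langle\nu,\lambda+\rho\rangle$, attained at $\nu'=\nu$ with $w$ sending $\nu$ to antidominant. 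The leading term is therefore $b_{\nu\nu}\,q^{-N\langle\omega',\lambda+\rho\rangle}$. The number of remaining terms grows polynomially in $N$, and each is bounded by $b_{\nu\nu'}\, q^{-N\langle\omega',\lambda+\rho\rangle}$.

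The main obstacle is controlling the coefficients $b_{\nu\nu'}$, which depend on $N$. To sidestep this I would take the ratio
\[
\frac{\ttP^{(q,k)}_\nu(\bar\mu)}{\ttP^{(q,k)}_\nu(\bar\lambda)}.
\]
Both numerator and denominator are sums with the same nonnegative coefficients. Lower-bound the numerator by its top term $b_{\nu\nu}\,q^{-N\langle\omega',\mu+\rho\rangle}$. Upper-bound the denominator by $\ttP^{(q,k)}_\nu(-\rho)\,q^{-N\langle\omega',\lambda+\rho\rangle}\cdot q^{-\langle\nu,\rho\rangle}$-type factors, using that the coefficients are normalized by $\ttP_\nu(-\rho)=1$, exactly as in the convex-combination argument of Theorem \ref{thm:mac-hgf}. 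This reduces everything to showing that $b_{\nu\nu}q^{\langle \nu,\rho\rangle}$, the normalized leading coefficient, decays at most exponentially with a rate that is beaten by the gap $q^{-N(\langle\omega',\mu+\rho\rangle-\langle\omega',\lambda+\rho\rangle)}$.

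If that rate comparison is delicate, I would instead use Proposition \ref{prop:conv-mac}'s log-convexity together with the explicit evaluation formula for $P_\nu(-\rho)$ (a product of $q$-shifted factorials that stays bounded as $N\to\infty$). This gives $b_{\nu\nu}\asymp q^{-\langle\nu,\rho\rangle}$ up to constants. The ratio is then at least $c\,q^{-N\delta}\to\infty$ with $\delta>0$, contradicting (3). This completes the cycle.
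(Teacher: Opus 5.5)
Your arguments for (1)$\Rightarrow$(2)$\Rightarrow$(3) are fine. The argument for (3)$\Rightarrow$(1), however, does not close, because its final quantitative step rests on a sign error.

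Positivity and normalization already give $1=\ttP^{(q,k)}_\nu(-\rho)\ge b_{\nu\nu}\,m_\nu(-\rho)\ge b_{\nu\nu}\,q^{-\langle\nu,\rho\rangle}$, so $b_{\nu\nu}\le q^{\langle\nu,\rho\rangle}$. The evaluation formula you cite gives $b_{\nu\nu}=1/P_\nu(-\rho)\asymp q^{+\langle\nu,\rho\rangle}$, not $q^{-\langle\nu,\rho\rangle}$. Hence $b_{\nu\nu}q^{\langle\nu,\rho\rangle}\le q^{2N\langle\omega',\rho\rangle}$ decays exponentially. Your lower bound for the ratio is therefore at best of order $q^{-N(\delta-2\langle\omega',\rho\rangle)}$, where $\delta>0$ is your exponent gap. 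This diverges only when $\delta>2\langle\omega',\rho\rangle$, which is a $k$-dependent condition that fails in general. For example, in type $A_1$ with $\lambda=0$, $\mu=\omega$ and $\rho=k\omega$, it requires $k<1/2$.

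The loss comes from the convex-combination upper bound on the denominator, which overshoots by a factor of order $q^{-2\langle\nu,\rho\rangle}$. For $\lambda=0$ the denominator is exactly $\ttP^{(q,k)}_\nu(\bar 0)=\ttP^{(q,k)}_\nu(-\rho)=1$, while your bound is $q^{-2\langle\nu,\rho\rangle}$. Sharpening this bound would mean controlling all the $N$-dependent lower coefficients $b_{\nu\nu'}$, which the proposal never does.

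There is also a secondary slip. The minimum of $\langle w\nu',\lambda+\rho\rangle$ is $\langle w_0\nu,\lambda+\rho\rangle=-\langle\nu,-w_0\lambda+\rho\rangle$, not $-\langle\nu,\lambda+\rho\rangle$. So when $w_0\neq-1$ you must send $\nu$ to infinity along $-w_0\omega'$. With your choice the top-term comparison can go the wrong way, e.g.\ in type $A_2$ with $\lambda=\omega_1$, $\mu=\omega_2$, where only the $\omega_2$ direction separates.

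The missing idea is simply not to dualize. Hypothesis (3) is already a statement about the \emph{fixed} polynomial $\ttP^{(q,k)}_\lambda$ evaluated at the moving points $\bar\nu$. Its expansion (\ref{eqn:mac-mon-pos-sym}) has finitely many coefficients $b_{\lambda\nu'}$, none of which depend on $\nu$. The paper proceeds as follows:
\begin{enumerate}
\item Take a dominant separating $x$ with $\langle x,\mu-\lambda\rangle>0$, and set $\nu_t=\lfloor t y\rfloor$ with $y=-w_0(x)$.
\item The single term $\nu'=\lambda$, $w=w_0$ bounds $\ttP^{(q,k)}_\lambda(\bar\nu_t)$ from below, and the number of terms times that same term bounds it from above.
\item Hence $\log_q\ttP^{(q,k)}_\lambda(\bar\nu_t)=-t\langle\lambda,x\rangle+O(1)$, and likewise for $\mu$.
\item So the ratio $\ttP^{(q,k)}_\mu(\bar\nu_t)/\ttP^{(q,k)}_\lambda(\bar\nu_t)$ grows like $e^{t|\log q|\langle\mu-\lambda,x\rangle}$.
\end{enumerate}
The obstacle you identified is an artifact of applying duality in the wrong direction. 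No estimate on $b_{\nu\nu}$ and no evaluation formula is needed.
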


\begin{proof}
    The implication (1) $\implies$ (2) is the $W_0$-convexity statement of Proposition \ref{prop:conv-mac}, and (2) $\implies$ (3) is trivial. It remains to show (3) $\implies$ (1). We prove the contrapositive: assuming that $\lambda$ does not $W_0$-majorize $\mu$, we find a sequence $\nu_t \in P_+$ along which $\ttP^{(q,k)}_\mu(\bar\nu_t) > \ttP^{(q,k)}_\lambda(\bar\nu_t)$ for any fixed $(q,k)$ and $t$ sufficiently large, contradicting (3).

    Since $\mu \notin \mathrm{conv}(W_0 \cdot \lambda)$, by the hyperplane separation theorem there exists $x \in V$ with $\langle x, \mu\rangle > \max_{w \in W_0} \langle x, w(\lambda)\rangle$. We may assume $x$ is dominant as the right-hand side of this inequality is $W_0$-invariant in $x$, so $\max_w \langle x, w(\lambda)\rangle = \langle x, \lambda\rangle$, giving $\langle x, \mu - \lambda\rangle > 0$. Set $y = -w_0 (x)$, which is also dominant since $-w_0$ fixes the dominant chamber, and take $\nu_t = \lfloor t y \rfloor \in P_+$ for $t > 0$, where the floor function is applied coordinate-wise in the fundamental weight basis, so that $\nu_t / t \to y$ as $t \to \infty$.

    We estimate the leading behavior of $\ttP^{(q,k)}_\lambda(\bar\nu_t)$ as $t \to \infty$ from the expansion (\ref{eqn:mac-mon-pos-sym}):
    \begin{equation} \label{eqn:P-nut-sum}
    \ttP^{(q,k)}_\lambda(\bar\nu_t) \;=\; \sum_{\nu \leq \lambda} b_{\lambda\nu}\, m_\nu(\bar\nu_t) \;=\; \sum_{\nu \leq \lambda} b_{\lambda\nu} \frac{|W_0 \cdot \nu|}{|W_0|} \sum_{w \in W_0} q^{\langle w(\nu), \bar\nu_t\rangle}.
    \end{equation}
    Substituting $\bar\nu_t = \nu_t + O(1)$ and $\nu_t = ty + O(1)$, we have
    \[
    \langle w(\nu), \bar\nu_t\rangle \;=\; t \langle w(\nu), y\rangle + O(1)
    \]
    for all $\nu \le \lambda$ and $w \in W_0$. The leading term in the expansion (\ref{eqn:P-nut-sum}) as $t \to \infty$ thus corresponds to the smallest value of $\langle w(\nu), y\rangle$. For $y$ dominant and $\nu \in P_+$, the minimum of $\langle w(\nu), y\rangle$ over $w \in W_0$ is attained at $w = w_0$ (sending $\nu$ to the antidominant representative of its orbit), and using $w_0^{-1} = w_0$ and $-w_0 (y) = x$, this minimum equals $\langle w_0(\nu), y\rangle = -\langle \nu, x\rangle$. Since $\nu \leq \lambda$ in the dominance order and $x$ is dominant, $\langle \lambda - \nu, x\rangle \geq 0$, giving $\langle \nu, x\rangle \leq \langle \lambda, x\rangle$. The minimum of $\langle w(\nu), y\rangle$ over all $\nu \le \lambda$ and $w \in W_0$ is therefore $-\langle \lambda, x\rangle$, attained at $\nu = \lambda$, $w = w_0$.

    Since $q \in (0,1)$ and all terms in the sum are positive, this largest term (with $\nu = \lambda$, $w = w_0$) provides a positive lower bound on $\ttP^{(q,k)}_\lambda(\bar\nu_t)$, and the sum is bounded above by the number of terms $|W_0|(\# \{\nu \le \lambda\})$ times this same largest term. Combining these bounds gives
    \[
    \log_q \ttP^{(q,k)}_\lambda(\bar\nu_t) \;=\; -t \langle \lambda, x\rangle + O(1)
    \]
    as $t \to \infty$, where the $O(1)$ depends on $\lambda$, the root system, $q$ and $k$, but not on $t$. The analogous estimate for $\ttP^{(q,k)}_\mu(\bar\nu_t)$ gives
    \[
    \log_q \ttP^{(q,k)}_\mu(\bar\nu_t) \;=\; -t \langle \mu, x\rangle + O(1).
    \]
    
    Taking the difference of the two preceding displays, we obtain
    \[
    \log_q \frac{\ttP^{(q,k)}_\mu(\bar\nu_t)}{\ttP^{(q,k)}_\lambda(\bar\nu_t)} \;=\; -t \langle \mu - \lambda, x\rangle + O(1).
    \]
    Since $\log q < 0$ and $\langle \mu - \lambda, x\rangle > 0$,
    \[
    \log \frac{\ttP^{(q,k)}_\mu(\bar\nu_t)}{\ttP^{(q,k)}_\lambda(\bar\nu_t)} \;=\; |\log q|\, t\, \langle \mu - \lambda, x\rangle + O(1) \;\to\; \infty
    \]
    as $t \to \infty$, so $\ttP^{(q,k)}_\mu(\bar\nu_t) > \ttP^{(q,k)}_\lambda(\bar\nu_t)$ for $t$ sufficiently large, contradicting (3).
\end{proof}

\begin{remark}
    Analogous statements to Propositions \ref{prop:E-lc} and \ref{prop:conv-mac} and Theorem \ref{thm:maj-mac} hold in the non-self-dual cases discussed in Section \ref{sec:prelim-mac} for both the Macdonald polynomials and the dual polynomials. We omit these here as the proofs are identical but the notation becomes significantly more cumbersome.
\end{remark}

We now consider the setting of Section \ref{sec:prelim-OC} and fix a root system $\Phi \subset V$.  The next two results are almost immediate from Theorem \ref{thm:GLP}.

\begin{cor} \label{cor:G-lc}
    For all nonnegative multiplicity parameters $k$ and all $x \in V$, the map $s \mapsto G_{k,s}(x)$ is a log-convex function on $V$.
\end{cor}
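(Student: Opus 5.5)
The plan is to read log-convexity straight off the Laplace transform representation of Theorem \ref{thm:GLP}. Fix $x \in V$ and a nonnegative multiplicity $k$. Replacing $s$ by $s+\rho$ in (\ref{eqn:GLP}) gives, for all real $s \in V$,
\[
G_{k,s}(x) = \int_V e^{\langle s+\rho, y\rangle}\, \pi_{k,x}(dy) = \int_V e^{\langle s, y\rangle}\, e^{\langle \rho, y\rangle}\,\pi_{k,x}(dy).
\]
So $s \mapsto G_{k,s}(x)$ is the Laplace transform of the finite positive measure $\sigma_x(dy) := e^{\langle \rho, y\rangle}\pi_{k,x}(dy)$. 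This measure is positive and compactly supported on $\mathrm{conv}(W_0\cdot x)$, and it is nonzero because $\pi_{k,x}$ is a probability measure. In particular $G_{k,s}(x) > 0$ for every real $s$, so $\log G_{k,s}(x)$ is finite everywhere.

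The key step is Hölder's inequality. Take $s_1, s_2 \in V$ and $t \in (0,1)$. Then
\[
e^{\langle t s_1 + (1-t)s_2, y\rangle} = \bigl(e^{\langle s_1,y\rangle}\bigr)^t \bigl(e^{\langle s_2,y\rangle}\bigr)^{1-t}.
\]
Apply Hölder with exponents $1/t$ and $1/(1-t)$ with respect to $\sigma_x$:
\[
G_{k,ts_1+(1-t)s_2}(x) \le \Bigl(\int e^{\langle s_1,y\rangle}\sigma_x(dy)\Bigr)^t \Bigl(\int e^{\langle s_2,y\rangle}\sigma_x(dy)\Bigr)^{1-t} = G_{k,s_1}(x)^t\, G_{k,s_2}(x)^{1-t}.
\]
This is exactly the definition of log-convexity given above. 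All the integrals are finite because the support is compact.

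I do not expect a real obstacle here, since all the substantive work is in Theorem \ref{thm:GLP}, namely the positivity of $\pi_{k,x}$. The only points to check are two. First, the weight $e^{\langle \rho,y\rangle}$ is positive, so absorbing the shift by $\rho$ preserves positivity of the measure. Second, nonnegativity of $G_{k,s}(x)$ for real $s$, which the definition of log-convexity presupposes, follows from the representation itself.
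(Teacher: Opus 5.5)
Your proposal is correct and follows essentially the same route as the paper. Both rewrite $G_{k,s}(x)$ via Theorem \ref{thm:GLP} as an integral of $e^{\langle s+\rho, y\rangle}$ against the positive measure $\pi_{k,x}$. The paper then simply asserts that a positive integral of log-convex functions is log-convex, and your H\"older step supplies exactly that justification explicitly, along with the check that $G_{k,s}(x) > 0$ for real $s$.
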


\begin{proof}
    Theorem \ref{thm:GLP} gives
    \[
    G_{k,s}(x) = \int_V e^{\langle s + \rho, y \rangle} \pi_{k, x}(dy).
    \]
    This is a positive linear combination (integral) of log-convex functions of $s$ and is therefore log-convex.
\end{proof}

\begin{cor} \label{cor:conv-HGF}
    For all nonnegative multiplicity parameters $k$ and all $x \in V$, the map $s \mapsto F_{k,s}(x)$ is a log-convex and $W_0$-convex function on $V$.
\end{cor}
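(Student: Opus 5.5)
The plan is to express $F_{k,s}(x)$ as a positive mixture of exponentials in $s$ that is visibly $W_0$-invariant, and then argue exactly as in Proposition \ref{prop:conv-mac}. By (\ref{eqn:HGF-def}) and Theorem \ref{thm:GLP} (in the form used in Corollary \ref{cor:G-lc}),
\[
F_{k,s}(x) \;=\; \frac{1}{|W_0|}\sum_{w \in W_0} G_{k,s}(w(x)) \;=\; \frac{1}{|W_0|}\sum_{w\in W_0} \int_V e^{\langle s+\rho,\, y\rangle}\, \pi_{k,w(x)}(dy).
\]
Each term is log-convex in $s$ by Corollary \ref{cor:G-lc}, and a positive sum of log-convex functions is log-convex (Hölder's inequality, or the fact that sums of log-convex functions are log-convex). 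This gives log-convexity on all of $V$.

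For $W_0$-convexity, I would use the standard fact that $F_{k,s}(x)$ is $W_0$-invariant in the spectral parameter: $F_{k,w(s)} = F_{k,s}$ for all $w \in W_0$. This is the classical symmetry of the Heckman--Opdam hypergeometric function. It follows because $F_{k,s}$ is characterized as the unique $W_0$-invariant normalized eigenfunction of the hypergeometric system, whose eigenvalues are given by $W_0$-invariant polynomials evaluated at $s$. Alternatively, it can be checked through the $q\to 1$ limit of Theorem \ref{thm:mac-hgf}, by symmetrizing and using the $W_0$-invariance of $P^{(q,k)}_\mu$ together with the duality (\ref{eqn:mac-sym-dual-new}). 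With this invariance in hand, $s \mapsto F_{k,s}(x)$ is a $W_0$-invariant, log-convex, hence convex, function on $V$. By \cite[Theorem 1]{GW}, a $W_0$-invariant convex function is $W_0$-convex, which finishes the argument.

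The only step that needs care is justifying the $W_0$-invariance in $s$. This is not immediate from the Laplace representation itself, since the shift by $\rho$ and the nonsymmetric measures $\pi_{k,w(x)}$ obscure it. If I wanted to avoid citing the literature, I would take the limit route. Proposition \ref{prop:conv-mac} shows that $\lambda\mapsto \ttP^{(q,k)}_\lambda(\bar\mu)$ equals $f(\bar\lambda)$ for a $W_0$-invariant $f$. Passing $q\to1$ along $\mu=\mu(x)$ then identifies $F_{k,s}(x)$ as a limit of $W_0$-invariant functions of $s$, namely the symmetrized version of Theorem \ref{thm:mac-hgf}. Convexity and invariance both survive pointwise limits, so this also yields the claim.
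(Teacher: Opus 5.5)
Your proof is correct and follows the paper's argument: log-convexity comes from Corollary \ref{cor:G-lc} together with the fact that positive sums of log-convex functions are log-convex, and $W_0$-convexity comes from the $W_0$-invariance of $F_{k,s}(x)$ in $s$ combined with \cite[Theorem 1]{GW}; the paper simply asserts this invariance, and your appeal to the $W_0$-invariant eigenvalues $p(s)$ of the hypergeometric system justifies it. Your optional $q \to 1$ route is unnecessary and, as stated, incomplete, because averaging Theorem \ref{thm:mac-hgf} over $W_0$ produces $\frac{1}{|W_0|}\sum_{w \in W_0} \ttE^{(q,k)}_{\mu(w(x))}(s)$, which is not manifestly $W_0$-invariant in $s$, so you would need a genuinely symmetric analogue of Theorem \ref{thm:mac-hgf} (with $\ttP^{(q,k)}_\lambda$ in place of $\ttE^{(q,k)}_\mu$) proved by rerunning the Carlson argument.
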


\begin{proof}
    By Corollary \ref{cor:G-lc} and the definition (\ref{eqn:HGF-def}), $s \mapsto F_{k,s}(x)$ is a positive sum of log-convex functions and is therefore log-convex.  Since $F_{k,s}(x)$ is also $W_0$-invariant in $s$, it is $W_0$-convex.
\end{proof}

The following theorem resolves a conjecture by McSwiggen and Novak \cite[Conjecture 4.7]{MN-majorization}.

\begin{thm} \label{thm:HGF-maj}
    For any $r, s \in V$, the following are equivalent:
    \begin{enumerate}
        \item $r$ $W_0$-majorizes $s$.
        \item  For all nonnegative multiplicity parameters $k$, $F_{k, r}(x) \ge F_{k,s}(x)$ for all $x \in V$.
        \item There exists a nonnegative multiplicity parameter $k$ such that $F_{k, r}(x) \ge F_{k,s}(x)$ for all $x \in V$.
    \end{enumerate}
\end{thm}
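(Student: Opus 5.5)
The plan mirrors the proof of Theorem \ref{thm:maj-mac}, with Corollary \ref{cor:conv-HGF} taking the place of Proposition \ref{prop:conv-mac} and an asymptotic analysis of $F_{k,s}(x)$ as $x \to \infty$ taking the place of the lattice asymptotics in $\nu_t$. The implication (1) $\implies$ (2) is exactly the $W_0$-convexity in Corollary \ref{cor:conv-HGF}, applied for each fixed $x$ and each $k \ge 0$. The implication (2) $\implies$ (3) is trivial. So the work is in (3) $\implies$ (1), which we prove by contrapositive.

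Suppose $s \notin \mathrm{conv}(W_0 \cdot r)$ and fix any $k \ge 0$. As in the proof of Theorem \ref{thm:maj-mac}, hyperplane separation gives a direction $z$ with $\langle z, s\rangle > \max_w \langle z, w(r)\rangle$. Since both sides are $W_0$-invariant after maximizing over orbits, we may replace $s$ and $r$ by their dominant representatives (this leaves $F_{k,\cdot}$ unchanged) and take $z$ dominant. We then obtain $\langle z, s - r\rangle > 0$, with $\max_w \langle z, w(r)\rangle = \langle z, r\rangle$ and $\max_w \langle z, w(s)\rangle = \langle z, s\rangle$. We will evaluate along the ray $x = tz$, $t \to \infty$, and aim to show
\[
\lim_{t\to\infty} \frac{1}{t}\log F_{k,s}(tz) = \max_{w} \langle w(s), z\rangle .
\]
The same limit holds for $r$. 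Granting this, $F_{k,s}(tz)/F_{k,r}(tz) \to \infty$, so $F_{k,s}(tz) > F_{k,r}(tz)$ for large $t$, contradicting (3).

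The asymptotic is the main step. For the upper bound, use \cite[Proposition 6.1]{Op}, already quoted in the proof of Theorem \ref{thm:mac-hgf}: it gives $G_{k,s}(w x) \le \sqrt{|W_0|}\, e^{\max_{w'} \langle s, w' x\rangle}$, and averaging yields $\limsup \frac1t \log F_{k,s}(tz) \le \max_w\langle w(s), z\rangle$.

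For the lower bound, I would first try the Laplace representation of Theorem \ref{thm:GLP}. Since $F_{k,s}$ is $W_0$-invariant in $s$, we have $F_{k,s}(x) = \frac{1}{|W_0|}\sum_{w} G_{k,w(s)}(x)$. Each summand is a positive integral $\int e^{\langle w(s)+\rho, y\rangle}\pi_{k,x}(dy)$, so the integrand is bounded below by $e^{-|\rho||x|}e^{\langle w(s),y\rangle}$ on the support. By Jensen,
\[
F_{k,s}(x) \ge e^{-C|x|} \exp\Bigl( \max_w \langle w(s), m(x)\rangle\Bigr)\cdot |W_0|^{-1},
\]
where $m(x)$ is the barycenter of $\pi_{k,x}$. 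The $\rho$-losses are linear in $t$, however, so they are not negligible. A better route is to use the known Harish-Chandra-type asymptotics of $F_{k,\lambda}$. For $s$ dominant with $\mathrm{Re}$ regular, $F_{k,s}(tz) \sim c(s)\, e^{\langle s - \rho, tz\rangle}$, with $c$ the Harish-Chandra $c$-function, nonzero for $k\ge0$. This asymptotic holds when $z$ lies in the open chamber, and we can choose $z$ strictly dominant by perturbing, since the separation inequality is open. Applied to $s$ and $r$, the common factor $e^{-\langle\rho,tz\rangle}$ cancels in the ratio, giving $F_{k,s}/F_{k,r} \approx (c(s)/c(r))\, e^{t\langle s-r, z\rangle}\to\infty$.

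Singular $s$ or $r$, where $c$ can degenerate, would be handled either by the refined asymptotic (a polynomial correction in $t$, still subexponential) or by a continuity/perturbation argument in $s$. Alternatively, for $k$ in the symmetric-space range one could use Harish-Chandra directly, and for general $k$ use the rank-one-reduction lower estimates of Schapira. I expect this subexponential lower bound, uniform enough to beat the $e^{t\langle s-r,z\rangle}$ gap, to be the main obstacle.
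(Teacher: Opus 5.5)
Your (1)$\implies$(2) and (2)$\implies$(3) are exactly the paper's. For (3)$\implies$(1) the paper gives no argument of its own and simply cites \cite[Proposition 4.8]{MN-majorization}, so a self-contained asymptotic proof would be a genuinely different route. As written, however, yours rests on a false claim.

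The target $\lim_{t\to\infty} t^{-1}\log F_{k,s}(tz)=\max_w\langle w(s),z\rangle$ fails whenever $k\neq 0$. Constants satisfy $D_{k,\xi}1=-\langle\rho,\xi\rangle$, so $G_{k,-\rho}\equiv 1$ and $F_{k,\rho}=F_{k,-\rho}\equiv 1$. The growth rate of $F_{k,\rho}$ along $tz$ is therefore $0$, not $\langle\rho,z\rangle>0$. For dominant $s$ and strictly dominant $z$ the correct rate is $\langle s-\rho,z\rangle$. That is exactly what the Harish-Chandra asymptotic you quote later says, so your own paragraphs contradict each other.

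In particular, \cite[Proposition 6.1]{Op} is not sharp for $F$. It overshoots by $e^{t\langle\rho,z\rangle}$, the same linear-in-$t$ loss you noticed in the Jensen bound. Use it as the upper bound for $F_{k,r}$, even together with a sharp lower bound for $F_{k,s}$, and you only get that $F_{k,s}(tz)/F_{k,r}(tz)$ is bounded below by a multiple of $e^{t(\langle s-r,z\rangle-\langle\rho,z\rangle)}$, which need not diverge. So the upper/lower-bound scheme must be dropped. You need estimates carrying the correct $\rho$-shift for \emph{both} parameters, so that $e^{-t\langle\rho,z\rangle}$ cancels in the ratio. For real strictly dominant $s,r$ and strictly dominant $z$, the asymptotic $F_{k,s}(tz)\sim c(s)e^{\langle s-\rho,tz\rangle}$ with $c(s)>0$ does exactly this. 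Your ratio argument is then correct, given a precise reference for that asymptotic valid for all $k\ge 0$.

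The singular case, which you leave as the main obstacle, is a real hole as stated. Continuity in $s$ would need uniformity of the $t\to\infty$ asymptotics up to the walls, which is the same difficulty again. It closes cheaply, though, using the implication (1)$\implies$(2) you already have:
\begin{enumerate}
\item Since $0\in\mathrm{conv}(W_0 r)$, we have $s\neq0$. So $W_0 s$ spans $V$, and there are regular dominant $s'\in\mathrm{conv}(W_0 s)$ arbitrarily close to $s$.
\item Take $r'=r+\varepsilon\delta$ with $\delta$ strictly dominant. It is regular and majorizes $r$, since $\langle\delta,\omega\rangle\ge0$ for every dominant $\omega$.
\item For small perturbations $\langle z,s'-r'\rangle>0$ persists, so $s'\notin\mathrm{conv}(W_0r')$.
\item The regular case gives $F_{k,s'}(tz)>F_{k,r'}(tz)$ for large $t$, and therefore $F_{k,s}(tz)\ge F_{k,s'}(tz)>F_{k,r'}(tz)\ge F_{k,r}(tz)$.
\end{enumerate}
Sharp two-sided estimates in which singular parameters only cost polynomial factors in $t$ would also work.

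Finally, the identity $F_{k,s}=|W_0|^{-1}\sum_w G_{k,w(s)}$ in your Jensen step does not follow from $W_0$-invariance in $s$, and it fails already in rank one for $k\neq0$. There it would give $G_{k,\rho}+G_{k,-\rho}=2F_{k,\rho}=2$, i.e.\ $G_{k,\rho}\equiv1$, contradicting $D_{k,\xi}1=-\langle\rho,\xi\rangle$. Since you abandon that branch it does no harm, but it should not be reused.
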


\begin{proof}
    The implication (1) $\implies$ (2) is the $W_0$-convexity statement of Corollary \ref{cor:conv-HGF}.  The implication (2) $\implies$ (3) is trivial. The implication (3) $\implies$ (1) is shown in \cite[Proposition 4.8]{MN-majorization}.
\end{proof}

\begin{remark}
When $r = \bar \lambda$, $s = \bar \mu$ for $\lambda, \mu \in P_+$, Theorem \ref{thm:HGF-maj} combined with the specialization (\ref{eqn:jac-hgf}) gives analogous majorization inequalities for the symmetric Heckman--Opdam Jacobi polynomials, generalizing the inequalities for Jack polynomials shown in \cite{McSwiggenSahiMajorization}.
\end{remark}

\subsection{Further prospects}
\label{sec:appl-analysis}

The Laplace transform representation in Theorem \ref{thm:GLP} offers a new approach to analytic questions in trigonometric Dunkl theory by translating statements about the kernel $G_{k,s}$ into statements about the measures $\pi_{k,x}$.  In principle, information about $\pi_{k,x}$ is accessible via the limit from the measures $\varpi_{q,\mu}$ defined in (\ref{eqn:varpi-def}), whose coefficients are given explicitly by the Ram--Yip formula \cite[Theorem 3.1]{RamYip2011}. This possibility opens a new line of attack on a number of problems that have so far remained intractable.

Although such applications are beyond the scope of the present work, we briefly illustrate the method in hopes that it will be useful to others. The most direct and elementary example is that regularity results on $\pi_{k,x}$ would translate immediately, via classical techniques of Fourier analysis, to sharper decay estimates on $G_{k,s}$ for imaginary spectral parameters. Here we consider a slightly more sophisticated application to a long-standing open problem in the field: sharp estimates for the Heckman--Opdam heat kernel.  Throughout this subsection we work in the setting of Section \ref{sec:prelim-OC}, fixing an irreducible root system $\Phi$ spanning $V \cong \R^n$ together with a strictly positive multiplicity parameter $k$.

We briefly recall the construction of the Heckman--Opdam heat kernel, following Schapira \cite{SchapiraHGF, SchapiraHOProc}.  Let
\[
L_k \;=\; \sum_{i=1}^n D_{k,\xi_i}^2
\]
be the \emph{Heckman--Opdam Laplacian}, where $\{\xi_1, \dots, \xi_n\}$ is any orthonormal basis of $V$; the operator $L_k$ is independent of the chosen basis.  Let $\mu_k$ be the measure on $V$ defined by $\mu_k(dx) = \delta_k(x)\,dx$, where
\[
\delta_k(x) \;=\; \prod_{\alpha \in \Phi^+} \Bigl| \sinh \tfrac{\langle \alpha, x \rangle}{2} \Bigr|^{2k(\alpha)},
\]
and let $\mathcal{D}_k = \tfrac{1}{2}(L_k - |\rho|^2)$.  As shown in \cite{SchapiraHGF}, the closure of $\mathcal{D}_k$ generates a strongly continuous Feller semigroup $(P_t^{(k)})_{t \geq 0}$, given for $t > 0$ by integration with respect to $\mu_k$ against the \emph{heat kernel}
\begin{equation} \label{eqn:heat-kernel-def}
p_t^{(k)}(x,y) \;=\; \int_V e^{-\frac{t}{2}\left(|\xi|^2 + |\rho|^2\right)}\, G_{k, i\xi}(x)\, G_{k, i\xi}(-y)\, \nu_k(d\xi), \qquad x, y \in V,
\end{equation}
where $\nu_k$ is the asymmetric Plancherel measure on $V \cong i V$ \cite[\S4]{SchapiraHGF}:
\begin{equation} \label{eqn:asym-plancherel}
  \nu_k(d\xi) \;=\; \mathfrak{c}\prod_{\alpha \in \Phi^+}
  \frac{\Gamma\!\big(i\langle \xi, \alpha^\vee\rangle + k(\alpha) + \tfrac{1}{2}k(\tfrac{\alpha}{2})\big)\,
        \Gamma\!\big(-i\langle \xi, \alpha^\vee\rangle + k(\alpha) + \tfrac{1}{2}k(\tfrac{\alpha}{2}) + 1\big)}
       {\Gamma\!\big(i\langle \xi, \alpha^\vee\rangle + \tfrac{1}{2}k(\tfrac{\alpha}{2})\big)\,
        \Gamma\!\big(-i\langle \xi, \alpha^\vee\rangle + \tfrac{1}{2}k(\tfrac{\alpha}{2}) + 1\big)}\, d\xi,
\end{equation}
where $\mathfrak{c}$ is a normalizing constant and $k(\tfrac{\alpha}{2}) = 0$ if $\tfrac{\alpha}{2} \notin \Phi$. The heat kernel $p_t^{(k)}$ is the transition kernel of the \emph{Heckman--Opdam process}, the c\`adl\`ag Feller process on $V$ with generator $\mathcal{D}_k$ \cite{SchapiraHOProc}.

Here we derive an alternative integral formula for the heat kernel via the Laplace transform representation of Theorem \ref{thm:GLP}.

\begin{prop} \label{prop:heat-rep}
    Define the kernel
    \begin{equation} \label{eqn:theta-def}
    \Theta_t(z) \;=\; \int_V e^{-\frac{t}{2}|\xi|^2}\, e^{i \langle \xi, z \rangle}\,\nu_k(d \xi), \qquad z \in V,\ t > 0.
    \end{equation}
    For all $t > 0$ and all $x, y \in V$,
    \begin{equation} \label{eqn:heat-rep}
    p_t^{(k)}(x,y) \;=\; e^{-\frac{t}{2}|\rho|^2} \int_V e^{\langle \rho,\, z \rangle}\, \Theta_t(z)\, \big( \pi_{k,x} * \pi_{k,-y} \big) (dz),
    \end{equation}
    where $*$ indicates the convolution of measures on $V \cong \R^n$, and the probability measures $\pi_{k,x}$, $\pi_{k,-y}$ are defined as in Theorem \ref{thm:GLP}. 
\end{prop}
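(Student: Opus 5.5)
The plan is to substitute the Laplace transform representation of Theorem \ref{thm:GLP} directly into the spectral formula (\ref{eqn:heat-kernel-def}) and then exchange the order of integration. Since $G_{k,s}(x)$ is entire in $s$, (\ref{eqn:GLP}) holds for all complex spectral parameters. Taking $s = i\xi + \rho$ gives
\[
G_{k,i\xi}(x) = \int_V e^{\langle i\xi+\rho,\, u\rangle}\,\pi_{k,x}(du), \qquad G_{k,i\xi}(-y) = \int_V e^{\langle i\xi+\rho,\, v\rangle}\,\pi_{k,-y}(dv).
\]
Multiplying these, we obtain a double integral over $(u,v)$ with respect to the product measure $\pi_{k,x}\otimes\pi_{k,-y}$. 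The integrand depends only on $z = u+v$, so pushing forward along $(u,v)\mapsto u+v$ gives
\[
G_{k,i\xi}(x)\,G_{k,i\xi}(-y) = \int_V e^{\langle \rho, z\rangle}\, e^{i\langle \xi, z\rangle}\,(\pi_{k,x}*\pi_{k,-y})(dz).
\]

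Next I insert this into (\ref{eqn:heat-kernel-def}) and pull out the factor $e^{-\frac t2|\rho|^2}$. It then remains to justify the Fubini interchange of the $\xi$-integral and the $z$-integral. The $z$-integration is over a compact set, namely $\mathrm{conv}(W_0 x) + \mathrm{conv}(W_0(-y))$, which is the support of the convolution of two probability measures with these supports. On that set $e^{\langle\rho,z\rangle}$ is bounded, and $|e^{i\langle\xi,z\rangle}|=1$. So absolute integrability reduces to
\[
\int_V e^{-\frac t2|\xi|^2}\,|\nu_k|(d\xi) < \infty.
\]
After the interchange, the inner $\xi$-integral is exactly $\Theta_t(z)$ from (\ref{eqn:theta-def}), and (\ref{eqn:heat-rep}) follows.

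The main obstacle is this integrability, together with the well-definedness of $\Theta_t$. For this I would control the density in (\ref{eqn:asym-plancherel}) using Stirling asymptotics: each factor
\[
\Gamma(i a + \kappa)\,\Gamma(-ia+\kappa+1)\big/\bigl(\Gamma(ia+\kappa')\,\Gamma(-ia+\kappa'+1)\bigr)
\]
grows at most polynomially in $|a|$, roughly like $|a|^{2(\kappa-\kappa')}$. The density is also continuous and free of poles on the real domain: the numerator Gamma arguments have positive real part when $k>0$, and reciprocal Gamma functions are entire. Hence $|\nu_k|$ has polynomially bounded density, and the Gaussian factor makes everything absolutely convergent for every $t>0$. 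This also shows that $\Theta_t$ is a bounded continuous function, so the final integral against the compactly supported convolution is finite.
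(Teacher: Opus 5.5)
Your proposal is correct and follows essentially the same route as the paper. You substitute the Laplace representation of Theorem~\ref{thm:GLP} at $s = i\xi+\rho$ into (\ref{eqn:heat-kernel-def}), justify Fubini using the compact supports and the polynomial growth of the Plancherel density (via Stirling), and recognize the $\xi$-integral as $\Theta_t$. The only differences are cosmetic: you form the convolution before exchanging the $\xi$- and $z$-integrals, whereas the paper exchanges on the triple integral and convolves afterwards, and your explicit check that the density is continuous and pole-free is a welcome extra detail.
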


\begin{proof}
    For an imaginary spectral parameter $s = i\xi$, Theorem \ref{thm:GLP} gives
    \begin{equation} \label{eqn:G-fourier}
    G_{k,i\xi}(x) \;=\; \int_V e^{i \langle \xi, y \rangle}\, e^{\langle \rho, y \rangle}\, \pi_{k,x}(dy),
    \end{equation}
    so that $G_{k,i\xi}(x)$ is the Fourier transform of the finite measure $e^{\langle \rho, y \rangle} \pi_{k,x}(dy)$ supported on the compact set $\mathrm{conv}(W_0 \cdot x)$.  Inserting (\ref{eqn:G-fourier}) for both factors $G_{k,i\xi}(x)$ and $G_{k,i\xi}(-y)$ into the heat kernel formula (\ref{eqn:heat-kernel-def}), we obtain
    \[
    p_t^{(k)}(x,y) = \int_V e^{-\frac{t}{2}(|\xi|^2 + |\rho|^2)} \left( \int_V e^{\langle \rho, y' \rangle} e^{i\langle \xi, y' \rangle} \pi_{k,x}(dy') \right) \left( \int_V e^{\langle \rho, z \rangle} e^{i\langle \xi, z \rangle} \pi_{k,-y}(dz) \right) \nu_k(d\xi).
    \]
    The above triple integral is absolutely convergent: the integrand is bounded in modulus by
    \[
    e^{-\frac{t}{2}|\xi|^2}\, e^{\langle \rho, y' \rangle}\, e^{\langle \rho, z \rangle},
    \]
    the exponentials $e^{\langle \rho, y' \rangle}$ and $e^{\langle \rho, z \rangle}$ are bounded on the compact supports of $\pi_{k,x}$ and $\pi_{k,-y}$, and the total variation $|\nu_k|$ satisfies
    \[
    \int_V e^{-\frac{t}{2}|\xi|^2}\, |\nu_k|(d \xi) < \infty.
    \]
    To see this last claim, observe that after applying Stirling's approximation to the Gamma functions in (\ref{eqn:asym-plancherel}), the exponential factors cancel between the numerator and denominator, which shows that the asymmetric Plancherel density grows at most polynomially at infinity.  We may therefore apply Fubini's theorem to integrate first with respect to $\xi$, which gives $\Theta_t(y' + z)$ in the integrand, and then rewriting one of the remaining integrals as a convolution gives (\ref{eqn:heat-rep}).
\end{proof}

Schapira proved that $p_t^{(k)}$ is strictly positive and symmetric \cite[Corollaries 5.1 and 5.2]{SchapiraHGF}, and obtained a sharp two-sided estimate for $p_t^{(k)}(0,x)$ \cite[Theorem 5.2]{SchapiraHGF}.  However, a corresponding sharp estimate on $p_t^{(k)}(x,y)$ for general $x, y$ has remained out of reach.  The difficulty is that the only available pointwise control of the integrand in (\ref{eqn:heat-kernel-def}) is the magnitude bound $|G_{k,i\xi}(x)| \le G_{k,0}(x)$ \cite[Proposition 3.1]{SchapiraHGF}, which discards the oscillation of $G_{k,i\xi}$ for imaginary spectral parameters, but that oscillation is precisely what determines the joint dependence of $p_t^{(k)}$ on $x$ and $y$.

To see how this difficulty could be resolved with a better understanding of the measures $\pi_{k,x}$, consider how a two-sided estimate on $p_t^{(k)}(x,y)$ would follow from (\ref{eqn:heat-rep}). As $t \to 0$, the Gaussian factor in (\ref{eqn:theta-def}) tends to $1$, and integration against $\Theta_t$ localizes around $z = 0$. The small-$t$ behavior of the integral in (\ref{eqn:heat-rep}) is therefore determined by the mass that the convolution $\pi_{k,x} * \pi_{k,-y}$ places in a shrinking neighborhood of the origin. Specifically, suppose that near $z = 0$ the measure $\pi_{k,x} * \pi_{k,-y}$ were absolutely continuous with a density $g_{x,y}(z)$ satisfying a two-sided bound $c \le g_{x,y}(z) \le C$, with constants depending on $x$ and $y$. Laplace's method applied to (\ref{eqn:heat-rep}) would then yield matching upper and lower bounds on $p_t^{(k)}(x,y)$.

\section*{Acknowledgements}

C.M. would like to thank Jean-Philippe Anker for making him aware of the main question addressed in this paper, as well as for writing the excellent notes \cite{AnkerDunklNotes}, which have significantly lowered the barriers to entry for researchers interested in Dunkl theory. The work of C.M. is partially supported by the National Science and Technology Council of Taiwan under grant number 113WIA0110762.  The work of S.S. is partially supported by the Simons Foundation under grant number 00006698.

\bibliography{refs}
\bibliographystyle{amsplain}

\end{document}